\documentclass[a4paper,11pt]{article}
 \usepackage[english]{babel}
 \usepackage{graphicx}
 \usepackage[utf8]{inputenc}
 \usepackage[T1]{fontenc}
 \usepackage{lmodern}
 \usepackage[normalem]{ulem}
 \usepackage{verbatim}
 \usepackage{bbm}
 \usepackage{ntheorem}
 \usepackage{stmaryrd}
 \usepackage{amsmath}
 \usepackage{amssymb}
 \usepackage{dsfont}
\usepackage{hyperref}
 \usepackage{manfnt}
  \usepackage{manfnt}
  \usepackage[font=small]{caption}
\captionsetup{width=9.5cm}

\theoremheaderfont{\scshape}
\theoremseparator{.}
\newtheorem{theorem}{Theorem}[section]
\newtheorem{thm}[theorem]{Theorem}

\newtheorem{lem}[theorem]{Lemma}

\newtheorem{prop}[theorem]{Proposition}
\newtheorem{coro}[theorem]{Corollary}
\newtheorem{fact}[theorem]{Fact}
\theoremheaderfont{\itshape}
\theorembodyfont{\upshape}
\theoremseparator{.}
\newtheorem{rem}[theorem]{Remark}

\def\sqw{\hbox{\rlap{\leavevmode\raise.3ex\hbox{$\sqcap$}}$%
\sqcup$}}


\newcommand{\N}{\ensuremath{{\mathbb N}}}
\newcommand{\Z}{\ensuremath{\mathbb Z}}

\newcommand{\R}{\ensuremath{\mathbb R}}

\newcommand{\Prime}{\mathcal{P}}

\newenvironment{example}[1][Example.]{\begin{trivlist}
\item[\hskip \labelsep {\textit{#1}}]}{\end{trivlist}}

\newcommand{\Zhat}{\hat \Z}

\newenvironment{proof}{  
    \vspace*{-.4em}  {\it Proof.}%
}{
    \hfill\sqw\vspace*{.5em}
}

\newenvironment{proofdomisto}{  
    \vspace*{-.4em}  {\it Proof of Proposition~\ref{prop:stochordering}.}%
}{
    \hfill\sqw\vspace*{.5em}
}

\newenvironment{proofpropcoprime}{  
    \vspace*{-.4em}  {\it Proof of Proposition~\ref{prop:coprime}.}%
}{
    \hfill\sqw\vspace*{.5em}
}

\newenvironment{proofpropgcd}{  
    \vspace*{-.4em}  {\it Proof of Proposition~\ref{prop:gcd}.}%
}{
    \hfill\sqw\vspace*{.5em}
}

\newenvironment{proofthmaff}{  
    \vspace*{-.4em}  {\it Proof of Proposition~\ref{thm:gcdaffine}.}%
}{
    \hfill\sqw\vspace*{.5em}
}

\newenvironment{proofcoraff}{  
    \vspace*{-.4em}  {\it Proof of Corollary~\ref{coro:gcdaffine}.}%
}{
    \hfill\sqw\vspace*{.5em}
}

\newcommand{\defini}{\textbf}
\newcommand{\stand}{\mathfrak{X}_0}
\author{Sébastien \textsc{Martineau}\footnote{Université Paris-Sud,  {\href{mailto:sebastien.martineau@u-psud.fr}{\nolinkurl{sebastien.martineau@u-psud.fr}}}}}
\title{On coprime percolation, the visibility graphon, and the local limit of the \textsc{gcd} profile}
\begin{document}
\maketitle

\renewcommand{\gcd}{\textsc{gcd}}

\begin{abstract}
Colour an element of $\Z^d$ white if its coordinates are coprime and black otherwise. What does this colouring look like when seen from a ``uniformly chosen'' point of $\Z^d$? More generally, label every element of $\Z^d$ by its \textsc{gcd}: what do the labels look like around a ``uniform'' point of $\Z^d$? We answer these questions and generalisations of them, provide results of graphon convergence, as well as a ``local/graphon'' convergence. One can also investigate the percolative properties of the colouring under study.
\end{abstract}

\vfill

\begin{center}
\includegraphics[width=9.6cm]{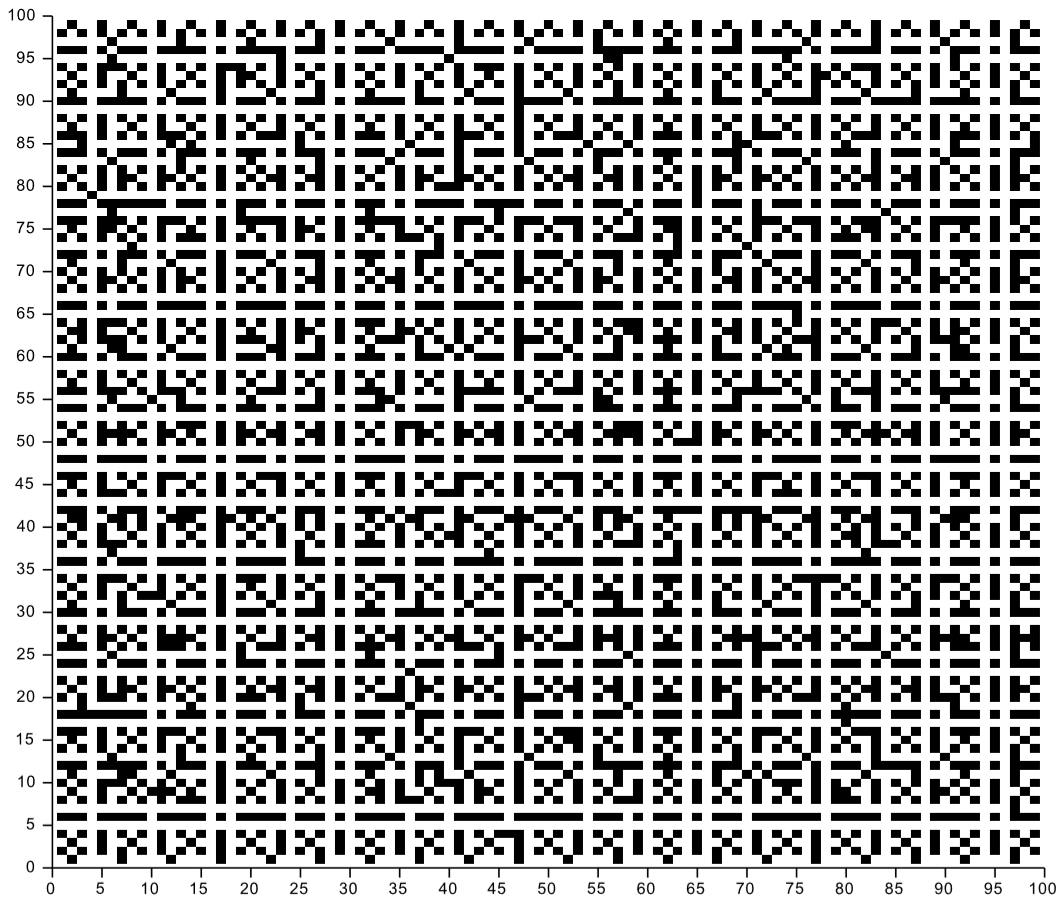}
\label{figurep}
\end{center}

\vfill

\newpage

Colour an element of $\Z^d$ white if its coordinates are coprime and black otherwise. What does this colouring look like? For $d=2$, this question was investigated in \cite{vardi}. The starting point of the present paper is the following question: what does this colouring look like \emph{when seen from a ``uniformly chosen'' point of $\Z^d$}? An answer has already been formulated in \cite{pleasantshuck} but, from the perspective adopted in the current paper, our vocabulary, techniques and results are more satisfactory. See the figure on page~\pageref{figurep}.

A more general version of this question goes as follows: if one labels every element of $\Z^d$ by its \textsc{gcd}, what do the labels look like around a ``uniform'' point of $\Z^d$? We answer this question and generalisations of it, provide results of graphon convergence, as well as a ``local/graphon'' convergence\footnote{Essentially, the same vertex-set will at the same time be endowed with some structure of sparse graph and some structure of dense graph.}. By using previous work of Vardi \cite{vardi}, we can also investigate the percolative properties of the colouring under study.

\section{Introduction}

In this paper, the set $\mathbb{N}=\{0,1,2,3,4,\dots\}$ is taken to contain 0. The set of positive integers will be denoted by $\N^\star=\{1,2,3,4,\dots\}$.

Let $d$ denote a positive integer. It is well-known that the probability that $d$ numbers chosen independently and uniformly in $\llbracket 1,N \rrbracket$ are globally coprime converges to $1/\zeta(d)$ when $N$ goes to infinity \cite{dirichlet, cesaro1, cesaro2, sylvester83}. Recall that on $[1,\infty)$, the \defini{Euler--Riemann $\zeta$ function} is defined by
$$
\forall s\in[1,\infty),~\zeta(s):=\sum_{n\geq 1}n^{-s}=\prod_{p\in \Prime}\frac{1}{1-p^{-s}}\in [1,\infty],
$$
where $\Prime=\{2,3,5,7,11,\dots\}$ denotes the set of prime numbers.
More generally, one has the following result (Theorem~459 in \cite{hardywright}).

\begin{equation}
\label{eq:classical}
\parbox{10.5cm}{\em Let $F$ be a bounded subset of $\R^d$. For every $r\in (0,\infty)$, set $F_r := \{x\in\Z^d~:~r^{-1}x\in F\}$. Assume that $\frac{|F_r|}{r^d}$ converges to a nonzero limit when $r$ tends to infinity.

Then, one has $\lim_{r\to\infty} \frac{|\{x\in F_r~:~\gcd(x_1,\dots,x_d)=1\}|}{|F_r|}=1/\zeta(d)$.}
\tag{A}
\end{equation}

The study of \defini{coprime} vectors of $\Z^d$, i.e.~of the vectors $x$ that satisfy $\gcd(x_1,\dots,x_d)=1$, can be performed for its own sake. It may also be motivated by the reducibility of fractions \begin{small}(the probability that a random fraction is irreducible is $1/\zeta(2)=\frac{6}{\pi^2}$)\end{small} for $d=2$ or by the visibility problem for arbitrary $d$. If $x$ and $y$ denote two distinct points of $\Z^d$, one says that $x$ is \defini{visible} from $y$ if the line segment $[x,y]$ intersects $\Z^d$ only at $x$ and $y$. This condition is equivalent to $x-y$ having a \textsc{gcd} equal to 1. The set of visible points has been studied in various ways: see e.g. \cite{bcz2000, baakehuck, baakemoodypleasants, cff, garet, ferraguti, generalines, herzogpatterns, pleasantshuck, vardi}.

A classical corollary of (\ref{eq:classical}) is the following stronger statement.
Recall that the \defini{zeta distribution} of parameter $s>1$ is the probability distribution on $\N^\star$ giving weight $\frac{n^{-s}}{\zeta(s)}$ to each $n\in \N^\star$.

\begin{equation}
\label{eq:gcdfolklore}
\parbox{10.5cm}{\em Let $d\geq 2$. Let $F$ be a bounded subset of $\R^d$. For every $r\in (0,\infty)$, set $F_r := \{x\in\Z^d~:~r^{-1}x\in F\}$. Assume that $\frac{|F_r|}{r^d}$ converges to a nonzero limit when $r$ tends to infinity. Let $Y_r$ denote a uniform element in $F_r$.

Then, $\gcd(Y_r)$ converges in distribution to a zeta distribution of parameter $d$, as $r$ goes to infinity.}
\tag{B}
\end{equation}

\vspace{0.2cm}

\noindent In this paper, we are interested in the following two informal questions.

\vspace{0.2cm}

\begin{equation}
\label{eq:qfaible}
\parbox{10.5cm}{\em Colour an element of $\Z^d$ white if its coordinates are coprime and black otherwise. What does this colouring look like when seen from a ``uniformly chosen'' point of $\Z^d$?}
\tag{Q1}
\end{equation}

\begin{equation}
\label{eq:qfort}
\parbox{10.5cm}{\em Label every element of $\Z^d$ by its \textsc{gcd}. What do the labels look like around a ``uniform'' point of $\Z^d$?}
\tag{Q2}
\end{equation}

Even though (\ref{eq:qfort}) is stronger than (\ref{eq:qfaible}), it is worthwhile to study both questions independently. Indeed, this leads to two strategies of different nature, and the intermediate proposition related to (\ref{eq:qfort}) is no stronger than that of (\ref{eq:qfaible}) --- see Proposition~\ref{prop:coprime}, Proposition~\ref{prop:gcd}, and Fact~\ref{fact:cex}.

\newcommand{\cop}{\mathsf{cop}}
\newcommand{\gcdcol}{\mathsf{gcd}}

Let us now make sense of these two questions.
Let $X$ be a Polish space. Denote by $\Omega_X$ the space $X^{\Z^d}$ endowed with the product topology, which is also Polish. Let $c$ be an element of $\Omega_X$.
In (\ref{eq:qfaible}), we will be interested in $X=\{0,1\}$ and $c=\cop:=\mathds{1}_{\text{coprime}}$. In (\ref{eq:qfort}), we will be interested in $X=\N$ and $c=\gcdcol:x\mapsto \textsc{gcd}(x_1,\dots,x_d)$.

If $F$ is a nonempty finite subset of $\Z^d$, one defines the probability measure $\mu_{F,c}$ as follows --- it represents $c$ seen from a uniform point in $F$. For any ${y}\in \Z^d$ and $\omega\in \Omega_X$, define $\tau_{{y}} \omega \in \Omega_X$ by:
$$\forall x\in \Z^d,~(\tau_y\omega)_x=\omega_{x-y}.$$
Let $Y$ be a uniformly chosen element of $F$. We denote by $\mu_{F,c}$ the distribution of $\tau_{-{Y}}c$.

We want to describe the limit of $\mu_{F_n,c}$ when $c$ is a map of interest and $(F_n)$ is a reasonable sequence of finite subsets of $\Z^d$, such as $(\llbracket 1,N \rrbracket^d)$, $(\llbracket -N,N \rrbracket^d)$, or $(\{x\in \Z^d: \|x\|_2\leq n\})$. (\footnote{This way of proceeding is closely related to local convergence and local weak convergence \begin{small}(also called Benjamini--Schramm convergence)\end{small}: see \cite{babai91, bslimits, diestelleader}.}) Limits are taken in the following sense: the space of probability measures on $\Omega_X$ is endowed with the weak topology, namely the smallest topology such that for every bounded continuous map $f:\Omega_X\to \R$, the map $\mu\mapsto \int_{\Omega_X} f(\omega) d\mu(\omega)$ is continuous.

\vspace{0.2cm}

\begin{small}
When $X$ is discrete, $\mu_n$ converges to $\mu$ if and only if for every cylindrical event $A$, one has $\mu_n[A]\xrightarrow[n\to \infty]{} \mu[A]$. Recall that a \defini{cylindrical event} is an event of the form $\{\omega\in \Omega~:~\omega_{|F}\in A\}$, where $F$ is a finite subset of $\Z^d$, $\omega_{|F}$ stands for the restriction of $\omega$ to $F$, and $A$ is some measurable subset of $X^F$.
When $X$ is compact, the space of probability measures on $X$ is compact for the weak topology.
\end{small}

\begin{thm}
\label{thm:gcd}

Let $d\geq 2$. Let $F$ be a bounded convex subset of $\R^d$ with nonempty interior. For every $r\in (0,\infty)$, set $F_r := \{x\in\Z^d~:~r^{-1}x\in F\}$.

Then, $\mu_{F_r,\gcdcol}$ converges to some explicit probability measure $\mu_{\infty,\gcdcol}$ when $r$ goes to infinity.
\end{thm}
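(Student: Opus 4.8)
The plan is to exploit that $X=\N$ carries the discrete topology, so that weak convergence of $(\mu_{F_r,\gcdcol})_r$ amounts to convergence of cylinder probabilities: for every finite window $S\subset\Z^d$ and every target $(a_s)_{s\in S}\in(\N^\star)^S$ one must show that
$$p_r:=\mu_{F_r,\gcdcol}\big[\omega_s=a_s\ \forall s\in S\big]=\Pp\big[\gcd(Y_r+s)=a_s\ \forall s\in S\big]$$
converges, where $Y_r$ is uniform in $F_r$ and I have used $(\tau_{-Y}\gcdcol)_s=\gcd(Y+s)$. (Targets $a_s=0$ force $Y_r=-s$, a single point, so they do not affect the limit and I assume $a_s\ge1$.)

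First I would reduce each $\gcd$-condition to a congruence modulo a \emph{fixed} modulus by smoothing. For a threshold $P$ and $w\in\Z^d$ put $g_P(w):=\prod_{p\in\Prime,\,p\le P}p^{\min_i v_p(w_i)}$, the $P$-smooth part of $\gcd(w)$, where $v_p$ is the $p$-adic valuation; and let $a_s^{(P)}:=\prod_{p\le P}p^{v_p(a_s)}$. The key point is that matching a fixed finite target fails to be a finite-modulus condition only because some large prime might divide a $\gcd$, and large primes dividing a bounded $\gcd$ are rare. Accordingly I set $p_r^{(P)}:=\Pp[g_P(Y_r+s)=a_s^{(P)}\ \forall s]$ and compare $p_r$ with $p_r^{(P)}$.

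Two ingredients carry the argument. First, a uniform error estimate: for $P\ge\max_s a_s$ the events $\{\gcd(Y_r+s)=a_s\ \forall s\}$ and $\{g_P(Y_r+s)=a_s^{(P)}\ \forall s\}$ differ only on the event that some prime $p>P$ divides every coordinate of some $Y_r+s$. Since $Y_r+s\neq0$ and $\|Y_r+s\|_\infty\le C_0 r$ for large $r$, any such prime satisfies $p\le C_0 r$, and the lattice-point count below gives $\Pp[p\mid(Y_r+s)]\le C\,p^{-d}$ uniformly in $r$; summing over $P<p\le C_0 r$ yields $|p_r-p_r^{(P)}|\le C\,|S|\sum_{p>P}p^{-d}$, which tends to $0$ as $P\to\infty$ uniformly in $r$ precisely because $d\ge2$ makes $\sum_p p^{-d}$ converge. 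Second, for each fixed $P$ the reduced event $\{g_P(Y_r+s)=a_s^{(P)}\ \forall s\}$ depends on $Y_r$ only through $Y_r\bmod m_P$, where $m_P:=\prod_{p\le P}p^{1+\max_s v_p(a_s)}$ is a fixed modulus (matching $\min_i v_p=v_p(a_s)$ is a condition modulo $p^{v_p(a_s)+1}$). The main analytic input is then equidistribution of residues: because $F$ is a bounded convex body with nonempty interior, for any fixed modulus $m$ and class $y_0$ one has $|F_r\cap(y_0+m\Z^d)|=\mathrm{vol}(F)(r/m)^d+O((r/m)^{d-1})$, the error being the surface term supplied by convexity, whence $\Pp[Y_r\equiv y_0\ (\mathrm{mod}\ m)]\to m^{-d}$. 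Applying this to $m=m_P$ gives $p_r^{(P)}\xrightarrow{r\to\infty}p_\infty^{(P)}$, an explicit density that factors into a finite product of local factors over the primes $p\le P$.

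It remains to interchange the two limits, and this is where I expect the real work to sit. The uniform bound $|p_r-p_r^{(P)}|\le\varepsilon(P)$ with $\varepsilon(P)\to0$ shows, via the same estimate applied in the limit, that $(p_\infty^{(P)})_P$ is Cauchy; a standard $\varepsilon/3$ argument then gives $p_r\to\lim_P p_\infty^{(P)}=:\mu_{\infty,\gcdcol}[\omega_s=a_s\ \forall s]$. Letting $P\to\infty$ turns the finite product of local factors into a convergent Euler product, which exhibits $\mu_{\infty,\gcdcol}$ explicitly as the law in which the prime components $p^{\min_i v_p(\cdot)}$ of the $\gcd$ field are independent across primes, each governed by a uniform $p$-adic shift in $\Z_p^d$; specialising to $S=\{0\}$ recovers the zeta-distributed marginal of statement~(B), confirming consistency. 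The only genuine obstacle is the uniform-in-$r$ control of the tail $\varepsilon(P)$, which rests on the lattice-point estimate above --- and hence on the convexity of $F$ --- together with the convergence of $\sum_p p^{-d}$ for $d\ge2$.
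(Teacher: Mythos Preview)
Your argument is correct and takes a genuinely different route from the paper. The paper works abstractly: it embeds $\Z$ into the profinite integers $\hat\Z$, passes to supernatural-number-valued \textsc{gcd}'s so as to obtain convergence on a compact target (Lemma~\ref{lem:super}), and then pulls back to $\N$-valued labels via an injectivity lemma (Lemma~\ref{lem:topolike}) together with a tightness hypothesis supplied by~(\ref{eq:gcdfolklore}). In particular the paper first isolates an intermediate statement (Proposition~\ref{prop:gcd}) valid for arbitrary F\o lner sequences under a tightness assumption, and only then specialises to dilates of convex bodies. You instead stay in $\Z^d$ throughout and compute cylinder probabilities directly: truncate the \textsc{gcd} to primes $\le P$, use lattice-point counting in convex bodies both for equidistribution modulo a fixed modulus and for the uniform tail bound $\Pp[p\mid Y_r+s]\le Cp^{-d}$, and then interchange limits by an $\varepsilon/3$ argument. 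Your route is more elementary and self-contained --- no profinite integers, no supernatural numbers, no compactness extraction --- and it makes the Euler-product structure of the limit transparent from the outset; the price is that it is tied to dilates of convex bodies (your uniform tail estimate leans on the geometry of $F$ via the surface term in the lattice-point count), whereas the paper's formulation cleanly separates the F\o lner/equidistribution input from the tightness input and thereby supports the generalisations of Section~\ref{sec:further}.
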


The probability measure $\mu_{\infty,\gcdcol}$ is defined on page \pageref{def:gcdinfty}. It does not depend on $F$ but only on $d$. Theorem~\ref{thm:gcd} answers (\ref{eq:qfort}), hence (\ref{eq:qfaible}).

\vspace{0.3cm}

The remaining of the paper is organised as follows. Section~\ref{sec:main} investigates (\ref{eq:qfaible}) and (\ref{eq:qfort}). This section contains the proof of Theorem~\ref{thm:gcd}, but also complementary results such as Proposition~\ref{prop:stochordering}. The use of relevant arithmetic/probabilistic notions makes the core of the problem very apparent: see $\footnotesize \textdbend$ on pages \pageref{subtil1} and \pageref{subtil2}. Section~\ref{sec:further}  provides several generalisations of these results.

\section{Answers to (\ref{eq:qfaible}) and (\ref{eq:qfort})}
\label{sec:main}

In Section~\ref{subsec:coprime}, we study (\ref{eq:qfaible}). In Section~\ref{subsec:gcd}, we investigate (\ref{eq:qfort}). Finally, in Section~\ref{subsec:rem}, we make several comments on Propositions~\ref{prop:coprime} and \ref{prop:gcd}, and we explore the percolative properties of $\mu_{\infty,\cop}$.

\subsection{The random coprime colouring}
\label{subsec:coprime}

In this section, we answer (\ref{eq:qfaible}) by proving Theorem~\ref{thm:coprime}. Even though Theorem~\ref{thm:coprime} is a particular case of Theorem~\ref{thm:gcd}, intermediate propositions of this section cannot be deduced from the content of Section~\ref{subsec:gcd}. See Fact~\ref{fact:cex}.

Let us define $\mu_{\infty,\cop}$, which will turn out to be the limit of $\mu_{F_n,\cop}$ when $(F_n)$ is a reasonable sequence of finite subsets of $\Z^d$. For every prime $p$, let $\mathcal{W}_p$ denote a uniformly chosen coset of $p\Z^d$ in $\Z^d$, i.e.~one of the $p^d$ sets of the form $x+p\Z^d$. Do all these choices independently. The distribution of $\bigcup_p \mathcal{W}_p$ is denoted by $\mu_{\infty,\cop}$. One can see $\mu_{\infty,\cop}$ as a probability measure on $\Omega_{\{0,1\}}$ by identifying a set with its indicator function.

\begin{thm}
\label{thm:coprime}
Let $d\geq 1$. Let $F$ be a bounded convex subset of $\R^d$ with nonempty interior. For every $r\in (0,\infty)$, set $F_r := \{x\in\Z^d~:~r^{-1}x\in F\}$.

Then, $\mu_{F_r,\cop}$ converges to $\mu_{\infty,\cop}$ when $r$ goes to infinity.
\end{thm}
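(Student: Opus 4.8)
The plan is to use that the colour space $\{0,1\}$ is finite, so that weak convergence of $\mu_{F_r,\cop}$ to $\mu_{\infty,\cop}$ is equivalent to convergence of the probabilities of all cylindrical events. Recalling $(\tau_{-Y}\cop)_x=\cop(x+Y)=\mathds 1[\gcd(x+Y)=1]$ with $Y$ uniform in $F_r$, I would write a pattern on a finite set $S\subseteq\Z^d$ through $\mathds 1[\omega_x=0]=1-\mathds 1[\omega_x=1]$ and expand: inclusion–exclusion expresses every cylindrical probability as a finite signed sum of the ``all coprime'' probabilities
\[
q_r(T):=\Pp_Y\big[\,\forall x\in T,\ \gcd(x+Y)=1\,\big],\qquad T\subseteq S.
\]
The very same expansion, applied coordinatewise to membership in $\bigcup_p\mathcal W_p$, reduces $\mu_{\infty,\cop}$ to the avoidance probabilities $q_\infty(T):=\Pp\big[T\cap\bigcup_p\mathcal W_p=\emptyset\big]$ (the limiting counterpart of coprimality). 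Hence it suffices to prove that $q_r(S)\to q_\infty(S)$ for every finite $S$.

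First I would identify the target. In the limit model the event $\{S\cap\bigcup_p\mathcal W_p=\emptyset\}$ is the intersection over $p$ of the independent events $\{S\cap\mathcal W_p=\emptyset\}$, so it factorises as $q_\infty(S)=\prod_p\big(1-N_p(S)/p^d\big)$, where $N_p(S):=\#\{x\bmod p:x\in S\}$ is the number of residues occupied by $S$ modulo $p$. (For a singleton this is $\prod_p(1-p^{-d})=1/\zeta(d)$, recovering (\ref{eq:classical}).)

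Next I would show $q_r(S)\to\prod_p(1-N_p(S)/p^d)$. Since $\gcd(v)=1$ exactly when $v\not\equiv0\bmod p$ for every prime $p$, the defining event of $q_r(S)$ is $\bigcap_p G_p$ with $G_p:=\{Y\bmod p\notin -S\bmod p\}$, which depends on $Y$ only through $Y\bmod p$ and has ``forbidden set'' of size $\#(-S\bmod p)=N_p(S)$. For a fixed finite set of primes the Chinese remainder theorem collapses $(Y\bmod p)_p$ into $Y\bmod M$, $M=\prod p$, and the arithmetic input is the equidistribution of $F_r$ modulo $M$: for each residue $a$,
\[
\frac{\#\{x\in F_r:x\equiv a\bmod M\}}{|F_r|}\xrightarrow[r\to\infty]{}\frac{1}{M^d}.
\]
This is a classical lattice-point count: the class $a+M\Z^d$ meets the convex body $rF$ in $\mathrm{vol}(rF)/M^d+O(r^{d-1})$ points, while $|F_r|=r^d\,\mathrm{vol}(F)+O(r^{d-1})$ by the nonempty-interior hypothesis, the $O(r^{d-1})$ boundary error being where convexity is used. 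Applying this to each of the finitely many allowed classes gives $\Pp_Y\big[\bigcap_{p\le P_0}G_p\big]\to\prod_{p\le P_0}(1-N_p(S)/p^d)$ for every truncation $P_0$.

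The main obstacle, and the genuinely delicate point, is the interchange of $r\to\infty$ with the infinite product, i.e.\ controlling divisibility by large primes uniformly in $r$. I would bound the error by $\Pp_Y\big[\bigcup_{p>P_0}G_p^c\big]\le\sum_{x\in S}\sum_{p>P_0}\Pp_Y[\,Y\equiv -x\bmod p\,]$. Here boundedness of $F$ is essential: $F_r$ lies in a ball of radius $Cr$, so a residue class modulo $p$ meets it in $O\big((r/p+1)^d\big)$ points; moreover, for fixed $x$ the congruence $x+Y\equiv0\bmod p$ forces $x+Y=0$ once $p>Cr+\max_{x\in S}\|x\|_\infty$, so all primes past this threshold contribute only through the single value $Y=-x$, i.e.\ through a set of probability $\le|S|/|F_r|\to0$. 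For $d\ge2$ the remaining range $P_0<p\le Cr$ gives a bound $O\big(\sum_{p>P_0}p^{-d}\big)$ that is uniform in $r$ and tends to $0$ as $P_0\to\infty$; letting first $r\to\infty$ and then $P_0\to\infty$ yields the claim. The borderline dimension $d=1$ escapes this estimate (the prime sum diverges) and must be handled directly: there $\gcd(x+Y)=1$ forces $x+Y\in\{-1,1\}$, so $q_r(S)\to0$ for every nonempty $S$, matching $\prod_p(1-N_p(S)/p)=0$. Together with the inclusion–exclusion reduction, this establishes the theorem.
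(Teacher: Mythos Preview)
Your argument is correct, but it follows a genuinely different route from the paper's. The paper deduces Theorem~\ref{thm:coprime} from the more general Proposition~\ref{prop:coprime} (F\o lner plus the $1/\zeta(d)$ density hypothesis), which is in turn proved via a stochastic-domination step (Proposition~\ref{prop:stochordering}): any subsequential limit of $\mu_{F_n,\cop}$ along a F\o lner sequence is dominated by $\mu_{\infty,\cop}$, because the map $\min_p$ from $\{0,1\}^{\Prime}$ to $\{0,1\}$ is upper semicontinuous; equality is then forced by matching the one-point marginal at the origin and invoking translation invariance. You instead bypass this machinery entirely: inclusion--exclusion reduces every cylinder to the ``all coprime on $T$'' probabilities $q_r(T)$, and you prove $q_r(T)\to q_\infty(T)$ directly by truncation at $P_0$, equidistribution modulo $\prod_{p\le P_0}p$ from the convex lattice-point count, and an explicit tail bound $\sum_{p>P_0}\Pp[Y\equiv -x\bmod p]=O\big(\sum_{p>P_0}p^{-d}\big)$ using boundedness of $F_r$. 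What each approach buys: the paper's route isolates the F\o lner hypothesis from the density hypothesis and yields Propositions~\ref{prop:coprime} and~\ref{prop:stochordering} as results of independent interest (the paper stresses, via Fact~\ref{fact:cex} and Remark~\ref{rem:folner}, that neither hypothesis can be dropped), whereas your route is more elementary and self-contained --- no compactness or subsequence extraction, no coupling --- at the cost of leaning directly on convexity for the $O(r^{d-1})$ boundary error and not producing those intermediate statements.
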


\begin{rem}
The convergence of $\mu_{F_n,\cop}$ to $\mu_{\infty,\cop}$ for \emph{some sequences $(F_n)$ of balls} was conjectured by Vardi and obtained by Pleasants and Huck: see Conjecture~1 in \cite{vardi} and Theorem 1 in \cite{pleasantshuck}. Their vocabulary and techniques are very different from those of the present paper, and our approach further yields Propositions~\ref{prop:coprime}, \ref{prop:stochordering}, and \ref{prop:gcd}. See also the two last paragraphs of Section~\ref{subsec:rem}.
\end{rem}

We will actually prove Proposition~\ref{prop:coprime} instead of Theorem~\ref{thm:coprime}. Say that a sequence $(F_n)$ of finite nonempty subsets of $\Z^d$ is a \defini{F\o lner sequence} if for every ${y}\in \Z^d$, one has $|F_n \Delta (F_n+{y})|=o(|F_n|)$. \begin{small}It suffices to check this condition for a family of ${y}$'s generating $\Z^d$ as a group. This condition is also equivalent to $\lim_n \frac{|\partial F_n|}{|F_n|}=0$, where the \defini{boundary} $\partial F$ of $F\subset \Z^d$ is the set of the elements of $\Z^d\backslash F$ that are adjacent to an element of $F$ for the usual (hypercubic) graph structure of $\Z^d$.\end{small}

\begin{prop}
\label{prop:coprime}
Let $d\geq 1$ and let $(F_n)$ be a F\o lner sequence of $\Z^d$. Assume that $\mu_{F_n}(\{\omega~:~(0,\dots,0)\in\omega\})$ converges to $1/\zeta(d)$.

Then, $\mu_{F_n,\cop}$ converges to $\mu_{\infty,\cop}$.
\end{prop}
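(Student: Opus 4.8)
The plan is to prove convergence of the finite-dimensional marginals and recover the full statement by inclusion--exclusion. Since $X=\{0,1\}$ is discrete, weak convergence amounts to $\mu_{F_n,\cop}[A]\to\mu_{\infty,\cop}[A]$ for every cylindrical event $A$, that is, for every finite $S\subseteq\Z^d$ and every pattern $\eta\in\{0,1\}^S$. Writing $\{\omega_{|S}=\eta\}=\bigcap_{x:\eta_x=1}\{x\text{ white}\}\cap\bigcap_{x:\eta_x=0}\{x\text{ white}\}^c$ and expanding the complements by inclusion--exclusion, it is enough to treat the events $\{\text{all points of }T\text{ are white}\}$ for finite $T\subseteq\Z^d$, since the limit decomposes by the same algebra. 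Unwinding the definition of $\mu_{F_n,\cop}$, this probability equals $\frac{1}{|F_n|}\,|\{Y\in F_n:\ x+Y\text{ coprime for all }x\in T\}|$. On the limit side, all of $T$ is white under $\mu_{\infty,\cop}$ exactly when each $\mathcal W_p$ avoids $T$; by independence over $p$ this is $\prod_p\bigl(1-m_p(T)/p^d\bigr)$, where $m_p(T)$ is the number of distinct cosets of $p\Z^d$ met by $T$. So everything reduces to showing $\frac{1}{|F_n|}|\{Y\in F_n: x+Y\text{ coprime }\forall x\in T\}|\to\prod_p(1-m_p(T)/p^d)$.

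First I would dispose of the small primes. A point $x+Y$ is divisible by $p$ iff $Y\equiv -x\pmod{p\Z^d}$, so for a fixed threshold $P$ the event $W_P$ that no point of $T+Y$ is divisible by any prime $p\le P$ depends only on $Y$ modulo $M\Z^d$ with $M=\prod_{p\le P}p$. The key input is that a F\o lner sequence equidistributes modulo any fixed sublattice: for fixed $m$ and any coset $a+m\Z^d$ one has $|F_n\cap(a+m\Z^d)|=|F_n|/m^d+o(|F_n|)$. This follows by translating $F_n$ along $e_1,\dots,e_d$, which act transitively on the $m^d$ cosets, each step changing the relevant count by at most $|F_n\,\Delta\,(F_n+e_i)|=o(|F_n|)$. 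Combining equidistribution modulo $M\Z^d$ with the Chinese remainder theorem (the surviving residues number $\prod_{p\le P}(p^d-m_p(T))$) gives $\Pp_{F_n}[W_P]\to\prod_{p\le P}(1-m_p(T)/p^d)$. Since $\{\text{all }T\text{ white}\}\subseteq W_P$, letting first $n\to\infty$ and then $P\to\infty$ yields the upper bound $\limsup_n\Pp_{F_n}[\text{all }T\text{ white}]\le\prod_p(1-m_p(T)/p^d)$.

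The hard part is the matching lower bound, i.e.\ controlling the contribution of large primes; this is where the hypothesis on the one-point marginal enters. I would bound $W_P\setminus\{\text{all }T\text{ white}\}$, the event that every point of $T+Y$ survives the small primes but some $x+Y$ is divisible by a prime $>P$, by $\bigcup_{x\in T}\bigl(W_P^{(x)}\setminus\{x+Y\text{ coprime}\}\bigr)$, where $W_P^{(x)}$ is the one-point event for $\{x\}$. For a single point $m_p(\{x\})=1$, so the previous step gives $\Pp_{F_n}[W_P^{(x)}]\to\prod_{p\le P}(1-p^{-d})$. At the same time $\Pp_{F_n}[x+Y\text{ coprime}]\to 1/\zeta(d)$ for \emph{every} fixed $x$: writing $C$ for the set of coprime points, the F\o lner property gives $\bigl|\,|(F_n+x)\cap C|-|F_n\cap C|\,\bigr|\le|F_n\,\Delta\,(F_n+x)|=o(|F_n|)$, so the assumed convergence at $x=0$ propagates to all translates. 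Hence $\Pp_{F_n}[W_P^{(x)}\setminus\{x+Y\text{ coprime}\}]\to\delta_P:=\prod_{p\le P}(1-p^{-d})-1/\zeta(d)$, and $\delta_P\to 0$ as $P\to\infty$ by the Euler product. Summing over $x\in T$ gives $\limsup_n\Pp_{F_n}[W_P\setminus\{\text{all }T\text{ white}\}]\le|T|\,\delta_P$, whence $\liminf_n\Pp_{F_n}[\text{all }T\text{ white}]\ge\prod_{p\le P}(1-m_p(T)/p^d)-|T|\,\delta_P$. Letting $P\to\infty$ closes the gap with the upper bound.

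The subtle point---presumably the one flagged by $\textdbend$---is that one must \emph{not} try to bound the tail $\sum_{p>P}\Pp_{F_n}[Y\in p\Z^d]$ directly: a general F\o lner sequence gives no uniform-in-$n$ control on $|F_n\cap p\Z^d|$ once $p$ is large compared with $F_n$. The trick is to sidestep this entirely by using the \emph{exact} value $1/\zeta(d)$ supplied by the hypothesis to pin down the large-prime contribution through the vanishing difference $\delta_P$, the F\o lner translation-invariance of densities being precisely what lets us invoke the hypothesis for each translate $x+Y$ rather than only for $Y$ itself.
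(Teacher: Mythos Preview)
Your proof is correct and complete. It takes a genuinely different route from the paper's. The paper first packages the finite-prime information into the auxiliary colouring $\pcop$ (Lemma~\ref{lem:coprime}), then exploits upper semicontinuity of $\min$ to obtain a \emph{stochastic domination} $\mu\preceq\mu_{\infty,\cop}$ for any subsequential limit $\mu$ (Proposition~\ref{prop:stochordering}), and finally uses the one-point hypothesis to force equality in the monotone coupling. Your argument instead works directly with the ``all of $T$ white'' probabilities via inclusion--exclusion: the truncation $W_P$ and Lemma~\ref{lem:chinese-folner}-style equidistribution give the upper bound (which is morally the stochastic domination restricted to increasing cylinders), and your union bound over single points, together with the F\o lner propagation of the $1/\zeta(d)$ hypothesis to every translate, gives the lower bound (which is the explicit counterpart of the paper's marginal-matching step). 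What the paper's route buys is Proposition~\ref{prop:stochordering} as a standalone result valid \emph{without} the $1/\zeta(d)$ assumption; what your route buys is a fully elementary argument with no compactness, no subsequences, and no coupling --- and your final paragraph correctly identifies the crux flagged at~$\textdbend$: the large-prime tail cannot be summed directly, and the exact value $1/\zeta(d)$ is what closes the gap.
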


Theorem~\ref{thm:coprime} immediately follows from (\ref{eq:classical}) and Proposition~\ref{prop:coprime}, which it thus suffices to prove. In Section~\ref{subsec:rem}, we will see that none of the F\o lner condition and the $1/\zeta(d)$-assumption can be removed from Proposition~\ref{prop:coprime}. This proposition will itself be deduced from the following stochastic domination result. If $\mu$ and $\nu$ denote two probability measures on $\Omega_{\{0,1\}}$, we say that $\mu$ is \defini{stochastically dominated} by $\nu$ if there is a coupling $(\mathcal{W},\mathcal{W}')$ of $(\mu,\nu)$ such that $\mathcal{W}\subset \mathcal{W}'$ almost surely.

\begin{prop}\label{prop:stochordering}
Let $d\geq 1$ and let $(F_n)$ be a F\o lner sequence of $\Z^d$. Assume that $\mu_{F_n,\cop}$ converges to some probability measure $\mu$.

Then, $\mu$ is stochastically dominated by $\mu_{\infty,\cop}$.
\end{prop}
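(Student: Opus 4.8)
The plan is to establish stochastic domination through the standard (Strassen-type) criterion: it suffices to show that for every bounded increasing cylindrical function $f:\Omega_{\{0,1\}}\to\R$ (depending on finitely many coordinates and increasing for the product order, where $\omega\leq\omega'$ means the set of $1$'s of $\omega$ is contained in that of $\omega'$), one has $\int f\,d\mu\leq\int f\,d\mu_{\infty,\cop}$. Since $\mu=\lim_n\mu_{F_n,\cop}$ and $f$ is bounded continuous, the left-hand side equals $\lim_n\E_Y[f(\tau_{-Y}\cop)]$, where $Y$ is uniform in $F_n$.

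The key structural observation I would exploit is that the coprime colouring seen from $Y$ has its set of coprime points equal to $\Z^d\setminus\bigcup_{p\in\Prime}(p\Z^d-Y)$, the complement of a union of random cosets. For a finite set of primes $S$, truncating to $S$ yields the superset $W_Y^S:=\Z^d\setminus\bigcup_{p\in S}(p\Z^d-Y)$. Because $f$ is increasing and $\mathds{1}_{W_Y}\leq\mathds{1}_{W_Y^S}$, we get $\E_Y[f(\tau_{-Y}\cop)]\leq\E_Y[f(W_Y^S)]$ for every $n$. This monotonicity is the mechanism that will ultimately produce domination rather than equality.

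Next I would use that $W_Y^S$ depends only on $Y$ modulo $P_S:=\prod_{p\in S}p$. The F\o lner hypothesis forces $Y\bmod P_S$ to equidistribute over $(\Z/P_S\Z)^d$ (each of the $P_S^d$ cosets of $P_S\Z^d$ captures an asymptotic fraction $P_S^{-d}$ of $F_n$, since translations within the finite quotient are asymptotically mass-preserving), and by the Chinese Remainder Theorem the residues modulo the distinct primes of $S$ become independent and uniform. Hence $\E_Y[f(W_Y^S)]$ converges, as $n\to\infty$, to the expectation of $f$ under the law of $\Z^d\setminus\bigcup_{p\in S}\mathcal{W}_p$ with independent uniform cosets $\mathcal{W}_p$; call this $\E_{\mu^S}[f]$. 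Combining, $\int f\,d\mu\leq\E_{\mu^S}[f]$ for every finite $S$. Letting $S\uparrow\Prime$, the coprime set $\Z^d\setminus\bigcup_{p\in S}\mathcal{W}_p$ decreases to $\Z^d\setminus\bigcup_{p\in\Prime}\mathcal{W}_p$, and since $f$ is cylindrical and bounded, dominated convergence gives $\E_{\mu^S}[f]\to\int f\,d\mu_{\infty,\cop}$, which would finish the argument.

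The heart of the matter, and the step I expect to be the main obstacle, is the order of limits: equidistribution modulo $P_S$ holds only for a fixed finite $S$ as $n\to\infty$, and there is no uniform control over primes comparable to or larger than the scale of $F_n$. This is precisely why the argument is intrinsically one-sided: monotonicity lets me discard the contribution of large primes, which can only shrink the coprime set, so I never need a lower bound on their effect. Recovering equality, and hence Proposition~\ref{prop:coprime}, should then require the extra $1/\zeta(d)$ input to pin down the density and rule out any loss in the tail of large primes.
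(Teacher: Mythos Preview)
Your argument is correct and complete. It differs from the paper's proof in a meaningful way: the paper constructs a monotone coupling \emph{directly}, by taking a subsequential weak limit of the joint law of $(\tau_{-Y_n}\cop,\tau_{-Y_n}\pcop)$ in $\Omega_{\{0,1\}}\times\Omega_{\{0,1\}^{\Prime}}$, invoking Lemma~\ref{lem:coprime} to identify the second marginal as $\mu_{\infty,\pcop}$, and then using that the relation $\omega(x)\leq\omega'(x)_p$ is clopen to pass the pointwise inequality to the limit; pushing the second coordinate through $\min_p$ then yields a coupling with $\mu_{\infty,\cop}$. You instead work on the dual side via Strassen's criterion: truncate to a finite prime set $S$, use monotonicity of $f$ to get a one-sided inequality, apply F\o lner equidistribution modulo $P_S$ (the content of Lemma~\ref{lem:chinese-folner}) to compute the truncated expectation exactly, and let $S\uparrow\Prime$ using bounded convergence. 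Both routes isolate the same phenomenon---upper semicontinuity of the passage from finitely many primes to all primes---but package it differently: the paper's version is topological (compactness and closedness of the order relation) and delivers an explicit coupling, while yours is order-theoretic and arguably more elementary, bypassing the auxiliary space $\Omega_{\{0,1\}^{\Prime}}$ and the $\pcop$ machinery entirely. Your closing remark about why the argument is intrinsically one-sided and why the $1/\zeta(d)$ input is needed for Proposition~\ref{prop:coprime} is exactly right and matches the paper's discussion.
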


The goal of the remaining of this section is to prove Proposition~\ref{prop:coprime} and Proposition~\ref{prop:stochordering}.

\begin{lem}
\label{lem:chinese-folner}
Let $d\geq 1$, let $N$ be a positive integer, and let $\pi:\Z^d\to (\Z/N\Z)^d$ denote reduction modulo $N$. Let $(F_n)$ be a F\o lner sequence of $\Z^d$, and let $Y_n$ denote a uniformly chosen element of $F_n$.

Then, $\pi(Y_n)$ converges in distribution to the uniform measure on $(\Z/N\Z)^d$.
\end{lem}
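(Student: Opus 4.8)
The plan is to reduce the convergence in distribution to a statement about the asymptotic equidistribution of $F_n$ among the $N^d$ residue classes modulo $N$. Since $(\Z/N\Z)^d$ is finite, convergence in distribution is equivalent to pointwise convergence of the mass function, so it suffices to show that for every $a \in (\Z/N\Z)^d$,
$$
\Pp[\pi(Y_n) = a] = \frac{|F_n \cap \pi^{-1}(a)|}{|F_n|} \xrightarrow[n\to\infty]{} \frac{1}{N^d}.
$$
First I would fix the standard generators $\bar e_1, \dots, \bar e_d$ of $(\Z/N\Z)^d$, where $e_i$ is the $i$-th basis vector of $\Z^d$. The core observation is that translation by $e_i$ realises, at the level of residue classes, the shift $a \mapsto a + \bar e_i$: indeed $(F_n + e_i) \cap \pi^{-1}(a) = (F_n \cap \pi^{-1}(a - \bar e_i)) + e_i$, so that $|F_n \cap \pi^{-1}(a - \bar e_i)| = |(F_n + e_i) \cap \pi^{-1}(a)|$.

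Next I would bring in the F\o lner property. Since $|F_n \Delta (F_n + e_i)| = o(|F_n|)$, the sets $F_n$ and $F_n + e_i$ have essentially the same intersection with any fixed set; in particular
$$
\big| |F_n \cap \pi^{-1}(a)| - |(F_n + e_i) \cap \pi^{-1}(a)| \big| \le |F_n \Delta (F_n + e_i)| = o(|F_n|).
$$
Combined with the previous identity, this yields
$$
\big| |F_n \cap \pi^{-1}(a)| - |F_n \cap \pi^{-1}(a - \bar e_i)| \big| = o(|F_n|),
$$
i.e.\ passing from a residue class to a neighbouring one (in the Cayley graph of $(\Z/N\Z)^d$ with respect to $\bar e_1, \dots, \bar e_d$) changes the corresponding count by only $o(|F_n|)$.

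Then I would use that this Cayley graph is connected with diameter bounded by a constant $C = C(N,d)$ (at most $d(N-1)$): any two residue classes are joined by a path of at most $C$ generator-steps. Summing the bound above along such a path, and using that $C$ is fixed so that a bounded sum of $o(|F_n|)$ terms is still $o(|F_n|)$, I obtain that for all $a, b \in (\Z/N\Z)^d$,
$$
\big| |F_n \cap \pi^{-1}(a)| - |F_n \cap \pi^{-1}(b)| \big| = o(|F_n|).
$$
Finally, since $\sum_{a} |F_n \cap \pi^{-1}(a)| = |F_n|$ and all $N^d$ summands coincide up to $o(|F_n|)$, each equals $\frac{|F_n|}{N^d}(1 + o(1))$, giving the claimed limit $1/N^d$ after dividing by $|F_n|$.

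The argument is essentially mechanical; the only point requiring a little care is to ensure the accumulated error stays $o(|F_n|)$. This is exactly where it matters that the number of generator-steps needed to connect any two residue classes is bounded \emph{independently of $n$} — it depends only on the fixed modulus $N$ and dimension $d$ — so that the finitely many error terms do not interact badly with the limit. I expect this to be the sole genuine, if minor, obstacle.
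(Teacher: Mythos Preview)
Your proof is correct and takes a genuinely different route from the paper's. The paper partitions $\Z^d$ into $N\times\cdots\times N$ boxes, calls a point of $F_n$ ``$n$-good'' if its entire box lies in $F_n$, and argues that (i) the F\o lner property forces the proportion of non-good points to vanish, and (ii) a uniformly chosen good point has \emph{exactly} uniform residue modulo $N$, since one may sample it by first picking a good box uniformly and then a uniform element of $\llbracket 0,N-1\rrbracket^d$ inside it. Your argument instead compares the counts $|F_n\cap\pi^{-1}(a)|$ for different residues $a$ directly: a single shift by $e_i$ permutes the residue classes while changing $F_n$ by only $o(|F_n|)$, and connectedness of $(\Z/N\Z)^d$ with bounded diameter propagates this to all pairs of residues. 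Your approach is slightly more algebraic and uses only the abstract F\o lner condition, with no box geometry; the paper's approach is more visual and yields exact uniformity on a large subset rather than approximate uniformity everywhere. Both are short and of comparable difficulty.
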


\vspace{0.3cm}

\begin{proof}
Partition $\Z^d$ into boxes of the form $\prod_{i=1}^d\llbracket Nx_i, N(x_i+1)-1\rrbracket$. For every $n$, say that an element $x$ of $F_n$ is $n$-good if the box $B$ containing it satisfies $B\subset F_n$. Let $Y_n$ denote a uniformly chosen element of $F_n$. Because $(F_n)$ is a F\o lner sequence, the probability that $Y_n$ is $n$-good converges to 1 as $n$ goes to infinity. But if $Y'_n$ denotes a uniformly chosen \emph{$n$-good} element of $F_n$, then $\pi(Y'_n)$ is precisely uniform in $(\Z/N\Z)^d$: one may generate $Y'_n$ by picking independently a uniform element ${Y}$ in $\llbracket 0, N-1\rrbracket^d$ and a uniform element $Y_n''$ in $\{x\in N\Z^d : x\text{ is }n\text{-good}\}$, and then writing $Y'_n=Y_n''+{Y}$. The result follows.
\end{proof}

\newcommand{\pcop}{\mathsf{pcop}}

We will need the forthcoming Lemma~\ref{lem:coprime}, which requires the following definitions of $\pcop$ and $\mu_{\infty,\pcop}$.
Let $X=\{0,1\}^\Prime$. The map $\pcop : \Z^d \to X$ is given by $\pcop(x)_p=\mathds{1}_{x\notin p\Z^d}$. Let us now define $\mu_{\infty,\pcop}$. For every $p\in \Prime$, pick a uniform coset $\mathcal{W}_p$ of $p\Z^d$ among the $p^d$ possible ones. Do this independently for every $p$. One defines the random element $\mathcal{W}'$ of $\Omega_X$ via $\mathcal{W}'(x)_p:=\mathds{1}_{x\notin \mathcal{W}_p}$. 
We will denote by $\mu_{\infty,\pcop}$ the distribution of the random variable $\mathcal{W}'$. Notice that the random element $\mathcal{W}$ of $\Omega_{\{0,1\}}$ defined by $\mathcal{W}(x):=\min_p \mathcal{W}'(x)_p$ has distribution $\mu_{\infty,\cop}$.

\begin{lem}
\label{lem:coprime}
Let $d\geq 1$ and let $(F_n)$ be a F\o lner sequence of $\Z^d$.

Then, $\mu_{F_n,\pcop}$ converges to $\mu_{\infty,\pcop}$.
\end{lem}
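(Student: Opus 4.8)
The plan is to reduce everything to Lemma~\ref{lem:chinese-folner}, exploiting the fact that the $p$-th coordinate of $\pcop$ is periodic with period $p$. Since $X=\{0,1\}^\Prime$ is compact, so is $\Omega_X\cong\{0,1\}^{\Prime\times\Z^d}$, and weak convergence of probability measures on $\Omega_X$ can be tested on the cylinder events depending on finitely many coordinates $(p,x)\in\Prime\times\Z^d$; these locally constant functions are dense in $C(\Omega_X)$, so the discrete-alphabet cylinder criterion of the introduction applies verbatim. It therefore suffices to fix a finite set of primes $P\subset\Prime$, a finite set of sites $S\subset\Z^d$, and prescribed values $(a_{p,x})_{(p,x)\in P\times S}\in\{0,1\}^{P\times S}$, and to prove that
$$\lim_{n\to\infty}\mu_{F_n,\pcop}\big[\forall(p,x)\in P\times S:\ (\omega_x)_p=a_{p,x}\big]=\mu_{\infty,\pcop}\big[\forall(p,x)\in P\times S:\ (\omega_x)_p=a_{p,x}\big].$$

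The key observation is that under $\mu_{F_n,\pcop}$ this event depends on $Y_n$ only through its class modulo $N:=\prod_{p\in P}p$. Indeed, writing $\omega=\tau_{-Y_n}\pcop$ one has $(\omega_x)_p=\mathds{1}_{x+Y_n\notin p\Z^d}$, and $x+Y_n\in p\Z^d$ holds if and only if $Y_n\equiv -x\pmod p$; as $p\mid N$ for every $p\in P$, the whole family $((\omega_x)_p)_{(p,x)\in P\times S}$ is a fixed deterministic function $g$ of $\pi(Y_n)\in(\Z/N\Z)^d$, where $\pi$ denotes reduction modulo $N$. Hence the left-hand probability equals $\Pp[g(\pi(Y_n))=(a_{p,x})]$. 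By Lemma~\ref{lem:chinese-folner}, $\pi(Y_n)$ converges in distribution to the uniform law $U$ on the finite set $(\Z/N\Z)^d$; since $g$ is a function of a discrete variable, this gives that $\Pp[g(\pi(Y_n))=(a_{p,x})]$ converges to $\Pp[g(U)=(a_{p,x})]$.

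It remains to identify this limit with the $\mu_{\infty,\pcop}$-probability of the same event. By the Chinese remainder theorem, the uniform variable $U$ on $(\Z/N\Z)^d$ corresponds to a family $(U_p)_{p\in P}$ of independent uniform variables on $(\Z/p\Z)^d$, and in these coordinates the event $\{g(U)=(a_{p,x})\}$ becomes a product over $p\in P$ of the independent events $\{\mathds{1}_{U_p\neq -x\bmod p}=a_{p,x},\ x\in S\}$. On the other hand, $\mu_{\infty,\pcop}$ is built from independent uniform cosets $\mathcal{W}_p=c_p+p\Z^d$ with $\mathcal{W}'(x)_p=\mathds{1}_{x\notin\mathcal{W}_p}=\mathds{1}_{c_p\neq x\bmod p}$, the coordinates for distinct primes again being independent. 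Since $c_p\mapsto -c_p$ is a measure-preserving bijection of $(\Z/p\Z)^d$, setting $c_p:=-U_p$ matches the two joint laws restricted to $P\times S$, prime by prime and hence globally. This proves the displayed identity, and thus the lemma.

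As for what is genuinely at stake: once Lemma~\ref{lem:chinese-folner} is in hand, the argument is essentially bookkeeping, and the only points requiring care are (i) the reduction to finitely many primes, which uses the compactness of $\Omega_X$ together with the product structure $X=\{0,1\}^\Prime$ so that the cylinder criterion applies, and (ii) the harmless sign flip $c_p=-U_p$ in matching the two descriptions. The substantive content — that residues modulo $p$ equidistribute along a F\o lner sequence, independently across primes via the Chinese remainder theorem — is exactly Lemma~\ref{lem:chinese-folner}, so I do not expect any further obstacle.
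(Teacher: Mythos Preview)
Your proof is correct. The reduction to finitely many primes via the identification $\Omega_X\cong\{0,1\}^{\Prime\times\Z^d}$ is clean, the periodicity observation is exactly right, and the sign flip $c_p=-U_p$ correctly matches the two descriptions.

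Your route differs from the paper's, though. The paper explicitly notes that Lemma~\ref{lem:coprime} \emph{can} be deduced from Lemma~\ref{lem:chinese-folner} --- which is what you do --- but chooses instead to derive it from Lemma~\ref{lem:zhat} via Remark~\ref{rem:zhattoeasier}: one shows that $\mu_{F_n,\profcol}$ converges to $\mu_{\infty,\profcol}$ on $\Omega_{\Zhat^d}$, and then pushes forward by the \emph{continuous} map $\Zhat\to\{0,1\}^\Prime$, $\mathfrak{n}\mapsto(\mathds{1}_{\mathfrak{n}_p\neq 0})_p$. Your argument is more direct and self-contained for this particular lemma, avoiding the profinite machinery entirely. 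The paper's detour through $\Zhat$ is not gratuitous, however: the same profinite setup is reused to prove Lemma~\ref{lem:super} (via continuity of the $\gcd$ map $\Zhat^d\to\SuperN$), which is the key input for Proposition~\ref{prop:gcd}. So the paper invests once in $\Zhat$ and collects both lemmas; you pay nothing extra here but would need a separate argument later.
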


Lemma~\ref{lem:coprime} can be deduced from Lemma~\ref{lem:chinese-folner}. We will instead deduce it from Lemma~\ref{lem:zhat}: see Remark~\ref{rem:zhattoeasier}. Our task is now to get Proposition~\ref{prop:stochordering} from Lemma~\ref{lem:coprime}, and Proposition~\ref{prop:coprime} from Proposition~\ref{prop:stochordering}. But before doing so, I would like to explain why Proposition~\ref{prop:coprime} does \emph{not} follow directly from Lemma~\ref{lem:coprime}.

\vspace{0.24cm}

\textdbend\ To\label{subtil1} deduce directly Proposition~\ref{prop:coprime} from Lemma~\ref{lem:coprime}, one would need the continuity of the map $f : \Omega_{\{0,1\}^\Prime}\to \Omega_{\{0,1\}}$ defined by $f(\omega)_x=\min_p \omega(x)_p$. Indeed, $\cop = f \circ \pcop$. But this continuity, equivalent to that of the $\min$ map defined from $\{0,1\}^\Prime$ to $\{0,1\}$, does \emph{not} hold. However, the $\min$ map is \emph{upper semicontinuous}, which permits the proof of Proposition~\ref{prop:stochordering}.
Notice that there can be no way to do ``as if'' $f$ was continuous, as the F\o lner assumption of Lemmas~\ref{lem:chinese-folner}, \ref{lem:coprime} and \ref{lem:zhat} does \emph{not} imply that $\mu_{F_n,\cop}$ converges to $\mu_{\infty,\cop}$ --- see Remark~\ref{rem:folner}.

\vspace{0.54cm}

\begin{proofdomisto} Let us make the assumptions of Proposition~\ref{prop:stochordering}.
For $Y_n$ uniformly chosen in $F_n$, let us consider $(\tau_{-Y_n}\cop,\tau_{-Y_n}\pcop)\in \Omega_{\{0,1\}}\times \Omega_{\{0,1\}^\Prime}\cong \Omega_{\{0,1\}\times \{0,1\}^\Prime}$. As $\{0,1\}\times \{0,1\}^\Prime$ is compact, up to passing to a subsequence, we may assume that the distribution of $(\tau_{-Y_n}\cop,\tau_{-Y_n}\pcop)$ converges to some probability measure $\rho$ on $\Omega_{\{0,1\}}\times \Omega_{\{0,1\}^\Prime}$. Notice that for every $x\in \Z^d$ and $p\in \Prime$, one has
$$
\pcop(x)_p=0\implies \cop(x)=0.
$$
Besides, for every $x$ and $p$ \begin{small}(which range over countable sets)\end{small}, the set $$\{(\omega,\omega')~:~\omega(x)\leq \omega'(x)_p\}$$ is open and closed inside $\Omega_{\{0,1\}}\times \Omega_{\{0,1\}^\Prime}$.
As a result, for $\rho$-almost every $(\omega,\omega')$, for every $x\in \Z^d$, one has $\omega(x)\leq \min_p \omega'(x)_p$. But recall that if $(\mathcal{W},\mathcal{W}')$ denotes a random variable with distribution $\rho$, then $\mathcal{W}$ has distribution $\mu$, $\mathcal{W}'$ has distribution $\mu_{\infty,\pcop}$ --- by Lemma~\ref{lem:coprime} ---, and thus $\min_p \mathcal{W}'(x,p)$ has distribution $\mu_{\infty,\cop}$. Proposition~\ref{prop:stochordering} follows.
\end{proofdomisto}

\vspace{0.44cm}

\begin{proofpropcoprime} Let us make the assumptions of Proposition~\ref{prop:coprime}. Up to taking a subsequence of $(F_n)$, we may assume that $\mu_{F_n,\cop}$ converges to some $\mu$. We want to prove that $\mu=\mu_{\infty,\cop}$. By Proposition~\ref{prop:stochordering}, there is a monotone coupling of $(\mu,\mu_{\infty,\cop})$, i.e.~some coupling $\rho$ of $(\mu,\mu_{\infty,\cop})$ such that for $\rho$-almost every $(\omega,\omega_\infty)$, one has $\forall x,~\omega(x)\leq \omega_\infty(x)$. But by the $1/\zeta(d)$-assumption, one has
$$
\mu(\{\omega:\omega(\vec{0})=1\})=1/\zeta(d)=\prod_{p\in \Prime}(1-p^{-d})=\mu_{\infty,\cop}(\{\omega:\omega(\vec{0})=1\}).
$$
As a result, for $\rho$-almost every $(\omega,\omega_\infty)$, one has $\omega(\vec{0})=\omega_\infty(\vec{0})$. The same argument applies for any $x\in\Z^d$ instead of $\vec{0}$. Indeed, the probability measure $\mu_{\infty,\cop}$ is translation invariant by construction, and $\mu$ is translation invariant because $(F_n)$ is a F\o lner sequence. Thus, $\mu$ is equal to $\mu_{\infty,\cop}$ and Proposition~\ref{prop:coprime} is proved.

\end{proofpropcoprime}

\subsection{The random $\gcd$ labelling}
\label{subsec:gcd}

This section is devoted to proving Theorem~\ref{thm:gcd}.

\vspace{0.2cm}

Let us first define $\mu_{\infty,\gcdcol}$.\label{def:gcdinfty} For every prime $p$, perform the following random choices:
\begin{itemize}
\item set $\mathcal{W}_0^p:=\Z^d$,
\item conditionally on $(\mathcal{W}_0^p,\dots,\mathcal{W}_{n-1}^p)$, pick a uniform coset $\mathcal{W}_n^p$ of $p^n\Z^d$ among those lying inside $\mathcal{W}_{n-1}^p$.
\end{itemize}
Do this independently for every $p$. We set the random $p$-adic valuation of a vertex $x$ in $\Z^d$ to be $V_p(x):=\sup\{n\in \N~:~x\in \mathcal{W}_n^p\}\in \llbracket 0,\infty\rrbracket$. We define the random \textsc{gcd} profile to be the random map $x\mapsto \prod_{p\in\Prime}p^{V_p(x)}$. \begin{small}(This occurs almost surely nowhere, but one should set $\prod_{p\in\Prime}p^{V_p(x)}$ to be 0 whenever $\forall p,~V_p(x)=\infty$.)\end{small} The distribution of the random \textsc{gcd} profile is denoted by $\mu_{\infty,\gcdcol}$. It is a priori a probability distribution on $\Omega_{\llbracket 0,\infty\rrbracket}$. By the Borel--Cantelli Lemma, for every $d\geq 2$, it is also a probability distribution on $\Omega_\N$, and it is as such that it will be considered from now on.

We will actually prove the following proposition.

\begin{prop}
\label{prop:gcd}
Let $d\geq 1$ and let $(F_n)$ be a F\o lner sequence of $\Z^d$. For every $n$, let $Y_n$ denote a uniformly chosen element of $F_n$. Assume that the distribution of $\gcd(Y_n)$ is tight \begin{small}(which implies $d\geq 2$)\end{small}.

Then, $\mu_{F_n,\gcdcol}$ converges to $\mu_{\infty,\gcdcol}$.
\end{prop}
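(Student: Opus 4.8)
The plan is to prove the equivalent statement that, since $X=\N$ is discrete, $\mu_{F_n,\gcdcol}\to\mu_{\infty,\gcdcol}$ holds as soon as $\mu_{F_n,\gcdcol}(C)\to\mu_{\infty,\gcdcol}(C)$ for every cylinder $C=\{\omega:\forall x\in W,\ \omega_x=g_x\}$ indexed by a finite window $W\subset\Z^d$ and a map $g:W\to\N$ (open sets being countable disjoint unions of such cylinders, the $\liminf$ half of Portmanteau then follows). Recall that the profile seen from $Y$ assigns to $x$ the value $\gcd(x+Y)$, and that for each $p\in\Prime$ one has $v_p(\gcd(x+Y))=\min_i v_p(x_i+Y_i)=\sup\{n:Y\equiv -x \pmod{p^n}\}$. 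It is convenient to record that $\mu_{\infty,\gcdcol}$ admits the following equivalent description: choose, independently over $p\in\Prime$, a Haar-uniform $p$-adic vector $U_p$ in $\Z_p^{\,d}$, and let the valuation at $x$ be $V_p(x)=\min_i v_p(x_i-(U_p)_i)$; indeed $\mathcal W_n^p=\{y:y\equiv U_p\pmod {p^n}\}$ realises the nested uniform cosets of the definition on page~\pageref{def:gcdinfty}.

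First I would treat a doubly truncated profile. Fix a finite set of primes $S$ and a level $L\in\N$, and consider $x\mapsto \prod_{p\in S}p^{\min(v_p(x+Y),L)}$. For $x$ ranging over the finite window $W$, this truncated profile depends on $Y$ only through $Y\bmod \prod_{p\in S}p^{L}$, since whether $v_p(x+Y)\ge n$ for $n\le L$ is decided by $Y\bmod p^{L}$. Hence Lemma~\ref{lem:chinese-folner} applies: $Y_n\bmod\prod_{p\in S}p^L$ converges to the uniform law, so the law of the truncated profile converges; by the Chinese Remainder Theorem the reductions mod $p^L$, $p\in S$, become independent and uniform, which is exactly the truncation of the description of $\mu_{\infty,\gcdcol}$ above. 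This yields convergence of every cylinder probability for the truncated profile, and is the routine part of the argument.

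The heart of the matter is to pass from the truncated profile to the genuine $\gcd$ profile, and this is where the tightness hypothesis is indispensable. The point, already flagged by $\footnotesize\textdbend$ on page~\pageref{subtil1}, is that $\gcd$ is \emph{not} a continuous function of the coset-membership data: no finite amount of such data bounds the valuations from above, so one cannot simply push the previous convergence forward. I would instead control the truncation error uniformly in $n$. If $S\supseteq\{p\le P\}$ and $L$ are large enough that $\prod_{p\in S}p^{\min(v_p(g_x),L)}=g_x$ for all $x\in W$, then on the event that, for every $x\in W$, $\gcd(x+Y)$ has no prime factor exceeding $P$ and all its valuations are $<L$, the truncated and genuine cylinders coincide. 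The complementary event is contained in $\bigcup_{x\in W}\{\gcd(x+Y)>\min(P,2^{L})\}$, because a prime factor $>P$ forces $\gcd>P$ and a valuation $\ge L$ forces $\gcd\ge 2^{L}$.

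Finally I would bound this complement uniformly in $n$. For each fixed $x$, the variable $x+Y_n$ is uniform on $F_n+x$, and the F\o lner property gives $|F_n\,\Delta\,(F_n+x)|=o(|F_n|)$, so the law of $\gcd(x+Y_n)$ is within $o(1)$ in total variation of that of $\gcd(Y_n)$; the tightness hypothesis therefore propagates to the whole finite window, making $\Pp[\gcd(x+Y_n)>M]$ small, uniformly in $n$, once $M$ is large. The fixed measure $\mu_{\infty,\gcdcol}$ (a single, hence tight, law, finite-valued almost surely for $d\ge2$ by Borel--Cantelli) is controlled the same way. An $\varepsilon/3$ argument combining the truncated convergence with these two uniform tail bounds then gives $\mu_{F_n,\gcdcol}(C)\to\mu_{\infty,\gcdcol}(C)$ for every cylinder $C$, hence the claimed weak convergence. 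The main obstacle is precisely this interplay: the discontinuity of $\gcd$ makes the truncation unavoidable, and it is only the tightness assumption---controlling simultaneously large primes and large valuations, uniformly in $n$---that makes the truncation error vanish.
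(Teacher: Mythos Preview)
Your proof is correct. The difference from the paper is one of packaging rather than of substance, but it is a real difference worth recording. The paper compactifies: it introduces the supernatural numbers $\SuperN=\llbracket 0,\infty\rrbracket^\Prime$, observes that the $\gcd$ map $\hat{\Z}^d\to\SuperN$ is continuous so that $\mu_{F_n,\supercol}\to\mu_{\infty,\supercol}$ holds \emph{without} any tightness assumption (Lemmas~\ref{lem:zhat} and~\ref{lem:super}), and then invokes the abstract Lemma~\ref{lem:topolike} --- tightness enters only to extract a subsequential limit in $\Omega_\N$, which the injective continuous embedding $\Psi:\N\hookrightarrow\SuperN$ then forces to equal $\mu_{\infty,\gcdcol}$. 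Your argument unwinds this: your ``doubly truncated profile'' is exactly a finite-dimensional marginal of the $\supercol$ profile, so your first step is a concrete instance of Lemma~\ref{lem:super}; and your $\varepsilon/3$ bound on the truncation error is a hands-on substitute for Lemma~\ref{lem:topolike}, with tightness used to make the error uniform in $n$ rather than to extract a subsequence. What you gain is self-containment --- no need to set up $\hat{\Z}$ or $\SuperN$ --- and a very transparent explanation of \emph{where} the tightness is spent (bounding both large prime factors and large valuations at once via $\gcd>\min(P,2^L)$). What the paper's route gains is modularity: Lemma~\ref{lem:super} and Lemma~\ref{lem:topolike} are reused verbatim in Section~\ref{sec:further} for the affine-subspace version. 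One small remark: your opening sentence about Portmanteau via countable disjoint unions of cylinders is fine, but you could equally note that cylinder convergence to a \emph{probability} limit already forces tightness of the sequence coordinatewise, which is another way to close the argument.
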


Theorem~\ref{thm:gcd} immediately follows from (\ref{eq:gcdfolklore}) and Proposition~\ref{prop:gcd}, which it thus suffices to prove. The forthcoming Lemma~\ref{lem:zhat} uses the notion of profinite numbers. Define $\Zhat$ as follows: $$\hat{\Z}=\varprojlim_n \Z/n\Z=\left\{x\in \prod_{n\geq 1}\Z/n\Z :\forall (m,n),~m|n\implies \pi_{m,n}(x_n)=x_m\right\},$$ where $\pi_{m,n}$ denotes the morphism of reduction modulo $m$ from $\Z/n\Z$ to $\Z/m\Z$. One can see $\Z$ as a dense subgroup of $\hat{\Z}$ via the injection
$$\Phi : N\mapsto (n\mapsto N+n\Z).$$
Elements of $\Zhat$ are called \defini{profinite numbers}.
The product topology on $\prod_{n\geq 1}\Z/n\Z$ induces on $\hat{\Z}$ and on $\Z\subset \hat{\Z}$ the so-called \defini{profinite topology}. It makes of $(\Zhat,+)$ a metrisable compact topological group.

\newcommand{\prof}{\mathfrak}
\newcommand{\profcol}{\mathsf{prof}}

Recall that when $p$ is a prime, the set of \defini{$p$-adic integers} is defined by $\Z_p=\varprojlim_n \Z/p^n\Z$. The Chinese Remainder Theorem implies that $\Zhat$ and $\prod_p \Z_p$ are isomorphic as topological groups (and even as rings), an isomorphism being given by $$\Zhat \ni \prof{n}\mapsto \left(n\mapsto \prof{n}_{p^n}\right)_{p\in \Prime}\in \prod_p \Z_p.$$

Let $\profcol:\Z^d\to\Zhat^d$ be defined by $\profcol(x)=(\Phi(x_1),\dots,\Phi(x_d))\in \Zhat^d$. In order to define a suitable probability measure $\mu_{\infty,\profcol}$, notice that there is a unique Haar measure on the compact group $\Zhat\cong \prod_p \Z_p$, which corresponds to the product of Haar measures on $\Z_p$. A Haar distributed element of $\Z_p$ is a random $p$-adic integer $(X_n)_{n\geq0}$ such that for every $n\geq 1$, conditionally on $X_{n-1}$, the element $X_n$ is uniform among the $p$ elements of $\Z/p^n\Z$ that reduce to $X_{n-1}$ modulo $p^{n-1}$.

Pick a Haar distributed element $Y$ of $\Zhat^d$, i.e.~$d$ independent Haar distributed elements of $\Zhat$. We denote by $\mu_{\infty,\profcol}$ the distribution of the following random element of $\Omega_{\Zhat^d}$:
$$
x\mapsto Y+\profcol(x).
$$

\begin{lem}
\label{lem:zhat}
Let $d\geq 1$ and let $(F_n)$ be a F\o lner sequence of $\Z^d$.

Then, $\mu_{F_n,\profcol}$ converges to $\mu_{\infty,\profcol}$.
\end{lem}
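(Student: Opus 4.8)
The plan is to reduce the statement to the equidistribution of a single random point. Write $Y_n$ for a uniform element of $F_n$. Since $\Phi$ is a group homomorphism and addition on $\Zhat$ extends addition on $\Z$, one has $\profcol(x+Y_n)=\profcol(x)+\profcol(Y_n)$ for every $x\in\Z^d$, so that $\tau_{-Y_n}\profcol$ is exactly the configuration $x\mapsto \profcol(x)+\profcol(Y_n)$. I would therefore introduce the map $T:\Zhat^d\to\Omega_{\Zhat^d}$ sending $z$ to the element $x\mapsto \profcol(x)+z$. Each coordinate of $T(z)$ depends continuously on $z$ because addition in $\Zhat^d$ is continuous, so $T$ is continuous for the product topology. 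With this notation $\mu_{F_n,\profcol}$ is the law of $T(\profcol(Y_n))$ and $\mu_{\infty,\profcol}$ is the pushforward $T_\star(\mathrm{Haar})$. As $T$ is continuous, it suffices to prove that $\profcol(Y_n)$ converges in distribution, on the compact metrisable group $\Zhat^d$, to a Haar-distributed element.

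To establish this convergence I would test against a dense family of observables. Because $\Zhat^d$ is profinite, the clopen cosets of $N\Zhat^d$ form, as $N$ ranges over $\N^\star$, arbitrarily fine partitions of $\Zhat^d$; hence, by uniform continuity of functions on the compact space $\Zhat^d$, the functions that factor through some finite quotient $(\Z/N\Z)^d=\Zhat^d/N\Zhat^d$ are dense in $C(\Zhat^d)$. It therefore suffices to check convergence of the integral of $f$ against the law of $\profcol(Y_n)$ for such an $f$. For $f$ factoring through the reduction $\pi_N:\Zhat^d\to(\Z/N\Z)^d$, this integral equals $\E[f(\pi_N(Y_n))]$, where $\pi_N(\profcol(Y_n))$ is simply $Y_n$ reduced modulo $N$.

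At this point Lemma~\ref{lem:chinese-folner} does the work: the F\o lner hypothesis guarantees that $Y_n$ reduced modulo $N$ converges in distribution to the uniform measure on $(\Z/N\Z)^d$, so $\E[f(\pi_N(Y_n))]\to \frac{1}{N^d}\sum_{a\in(\Z/N\Z)^d} f(a)$. On the other hand, the unique normalised Haar measure on $\Zhat^d$ pushes forward under $\pi_N$ to the uniform (Haar) measure on the finite quotient $(\Z/N\Z)^d$, so the displayed limit is precisely the integral of $f$ against $\mathrm{Haar}$. This yields weak convergence of the law of $\profcol(Y_n)$ to Haar measure on $\Zhat^d$, and the lemma follows by the continuity of $T$ and the fact that weak convergence is preserved by pushforward along continuous maps.

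I expect the only genuinely substantive point to be the density of finite-quotient observables in $C(\Zhat^d)$, that is, the upgrade from ``convergence modulo every $N$'' to weak convergence on the full profinite group $\Zhat^d$; this is exactly where the profinite (totally disconnected, compact) structure is used. Everything else is the routine verification that $\profcol$ intertwines translation on $\Z^d$ with translation on $\Zhat^d$ and that the pushforward map $T$ is continuous.
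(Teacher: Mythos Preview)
Your argument is correct and follows essentially the same route as the paper: both reduce to showing that $\profcol(Y_n)$ converges in distribution to the Haar measure on $\Zhat^d$ via Lemma~\ref{lem:chinese-folner}, and then use the deterministic relation $\sigma(x)=\sigma(\vec{0})+\profcol(x)$ to recover the whole configuration. The only cosmetic difference is that the paper passes to a subsequential limit by compactness and then invokes the fact that the constraint set $\{\sigma:\forall x,\ \sigma(x)=\sigma(\vec{0})+\profcol(x)\}$ is closed, whereas you encode the same structure via the continuous map $T$ and push forward directly---your packaging is slightly more streamlined but the substance is identical.
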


\begin{proof}
As $\Zhat$ is compact, up to passing to a subsequence, we may assume that $\mu_{F_n,\profcol}$ converges to some probability measure $\mu$.

Let $Y_n$ denote a uniformly chosen element of $F_n$, and set $X_n:=\Phi(Y_n)$. By Lemma~\ref{lem:chinese-folner}, for every $N$, the $(N!)$-component of $X_n$ converges in distribution to a uniform element of $\Z/N!~\Z$. As a result, $X_n$ converges in distribution to a Haar distributed element of $\Zhat$. Therefore, if $\mathfrak{s}$ is $\mu$-distributed, then $\mathfrak{s}(0)$ is Haar distributed on $\Zhat$.

It thus suffices to prove that $\mu$ gives probability 1 to the following set: $\{\sigma\in \Omega_{\Zhat}~:~\forall x\in\Z^d,~\sigma(x)=\sigma(\vec{0})+\profcol(x)\}$. But this is clear as it is closed and has probability 1 for every $\mu_{F_n,\profcol}$.
\end{proof}

\begin{rem}\label{rem:zhattoeasier} As the map $f:\Zhat \to \{0,1\}^\Prime$ defined by $f(\mathfrak{n})_p:=\mathds{1}_{\mathfrak{n}_p\not=0}$ is continuous, Lemma~\ref{lem:coprime} follows directly from Lemma~\ref{lem:zhat}.
\end{rem}

\newcommand{\SuperN}{\mathfrak{N}}
\newcommand{\supercol}{\mathsf{super}}

As we are interested in the \textsc{gcd} of random elements of $\Z$, it is useful to recall what the \textsc{gcd} of a profinite number is. While the \textsc{gcd} of an element of $\Z$ is a natural number (possibly 0), the \textsc{gcd} of a profinite number will be a \emph{supernatural} number. The set of \defini{supernatural numbers} is $\SuperN:=\llbracket 0,\infty\rrbracket^\Prime$. The set $\llbracket 0,\infty\rrbracket$ is endowed with the usual topology \begin{small}(a set $U$ is open if and only if for $n$ large enough, $n\in U \iff \infty \in U$)\end{small}, and $\SuperN=\llbracket 0,\infty\rrbracket^\Prime$ is endowed with the product of these topologies. One can see any nonnegative integer $n\in \N$ as a supernatural number via the following injection:
$$
\Psi : n \mapsto (p\mapsto p\text{-adic valuation of }n).
$$
The $p$-adic valuation of a profinite number $\mathfrak{n}$ is
$$
v_p(\mathfrak{n}):=\sup \{k\in \N~:~\mathfrak{n}_{p^k}=0\} \in \llbracket 0,\infty\rrbracket.
$$
The \textsc{gcd} of $(\mathfrak{n}_1,\dots,\mathfrak{n}_d)$ is the following supernatural number:
$$
p\mapsto \min(v_p(\mathfrak{n}_1),\dots,v_p(\mathfrak{n}_d)).
$$

Let $\supercol : \Z^d \to \SuperN$ be defined by $x\mapsto \Psi(\gcd(x))$.
Given a Haar distributed element $Y$ of $\Zhat^d$, denote by $\mu_{\infty,\supercol}$ the distribution of the following random element of $\Omega_{\SuperN}$:
$$
x\mapsto \gcd(Y+\profcol(x)).
$$

\begin{lem}
\label{lem:super}
Let $d\geq 1$ and let $(F_n)$ be a F\o lner sequence of $\Z^d$.

Then, $\mu_{F_n,\supercol}$ converges to $\mu_{\infty,\supercol}$.
\end{lem}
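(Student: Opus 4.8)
The plan is to realise $\supercol$ as the image of $\profcol$ under a \emph{continuous} map and then transport the convergence of Lemma~\ref{lem:zhat} by pushing it forward. Let $g:\Zhat^d\to\SuperN$ be the gcd map, given by $g(\mathfrak{n}_1,\dots,\mathfrak{n}_d)_p=\min(v_p(\mathfrak{n}_1),\dots,v_p(\mathfrak{n}_d))$. Since for $N\in\Z$ the valuation $v_p(\Phi(N))$ coincides with the usual $p$-adic valuation of $N$, one checks from the definitions that $\supercol=g\circ\profcol$. Writing $G:\Omega_{\Zhat^d}\to\Omega_\SuperN$ for the map that applies $g$ at each vertex, the same definitional unwinding gives $\mu_{F_n,\supercol}=G_\star\,\mu_{F_n,\profcol}$ and $\mu_{\infty,\supercol}=G_\star\,\mu_{\infty,\profcol}$ (the translate $x\mapsto Y+\profcol(x)$ is sent by $G$ to $x\mapsto\gcd(Y+\profcol(x))$, which is exactly the random profile defining $\mu_{\infty,\supercol}$).

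It therefore suffices to prove that $G$ is continuous: pushing forward the weak convergence $\mu_{F_n,\profcol}\to\mu_{\infty,\profcol}$ of Lemma~\ref{lem:zhat} along a continuous $G$ immediately yields $\mu_{F_n,\supercol}\to\mu_{\infty,\supercol}$. As $\Omega_{\Zhat^d}$ and $\Omega_\SuperN$ carry product topologies and $G$ acts coordinatewise through $g$, the continuity of $G$ reduces to that of $g$, hence, $\SuperN$ being a product over $\Prime$, to the continuity of each valuation $v_p:\Zhat\to\llbracket 0,\infty\rrbracket$ together with that of $\min:\llbracket 0,\infty\rrbracket^d\to\llbracket 0,\infty\rrbracket$. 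For the former, the set $\{v_p\geq k\}=\{\mathfrak{n}:\mathfrak{n}_{p^k}=0\}$ is clopen in $\Zhat$, so the preimage under $v_p$ of any open subset of $\llbracket 0,\infty\rrbracket$ (which is either a finite set of integers, or the complement of one) is clopen; the continuity of $\min$ on finitely many coordinates is an elementary check in the order topology of $\llbracket 0,\infty\rrbracket$.

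I do not expect a genuine obstacle here; the only point requiring care is \emph{why} continuity should hold at all, in view of the failure recorded on page~\pageref{subtil1}. The distinction is that $\supercol$ retains the full per-prime data: the $\min$ appearing in $g$ is taken over the $d$ coordinates only, and \emph{never} over the infinite set $\Prime$. It is precisely the collapse $\SuperN\to\{0,1\}$ by an infinite $\min$ (equivalently, the map $f$ of page~\pageref{subtil1}) that is discontinuous, whereas the fibrewise gcd considered here is not. This is why the argument for $\supercol$ can be a clean application of the continuous mapping principle, unlike the more delicate domination argument needed for $\cop$.
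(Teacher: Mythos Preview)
Your proposal is correct and follows exactly the paper's route: the paper's proof is the single line ``As the \textsc{gcd}-map is continuous from $\Zhat$ to $\SuperN$, this follows directly from Lemma~\ref{lem:zhat}.'' You have simply unpacked this sentence, verifying the continuity of the coordinatewise \textsc{gcd} map in detail and adding a helpful remark on why the obstruction of page~\pageref{subtil1} does not arise here.
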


\begin{proof}
As the \textsc{gcd}-map is continuous from $\Zhat$ to $\SuperN$, this follows directly from Lemma~\ref{lem:zhat}.
\end{proof}

Notice that the conclusion of Lemma~\ref{lem:super} is quite close to the one we are looking for. Assume further that $d\geq 2$. Then, as $\sum_{p\in\Prime} p^{-d}<\infty$, the Borel--Cantelli Lemma yields that if $\mathbf{X}$ is $\mu_{\infty, \supercol}$-distributed, then almost surely, every $x\in \Z^d$ satisfies $\mathbf{X}(x)\in \Psi(\N^\star)$. Recall that $\Psi(\N^\star)$ consists in finitely supported elements of $\SuperN=\llbracket 0,\infty\rrbracket^\Prime$ all the values of which are finite. If $\mathbf{n}\in \Psi(\N^\star)$, then $\mathbf{n}=\Psi\left(\prod_{p\in \Prime} p^{\mathbf{n}_p}\right)$ and we write $\chi(\mathbf{n})=\prod_{p\in \Prime} p^{\mathbf{n}_p}\in \N^\star$. Therefore, the random variable $(\chi(\mathbf{X}(x)))_{x\in\Z^d}$ has distribution $\mu_{\infty,\gcdcol}$.

\textdbend\  If\label{subtil2} $\chi$ was continuous and defined everywhere on $\SuperN$, one could directly deduce from Lemma~\ref{lem:super} that $\mu_{F_n,\gcdcol}$ converges to $\mu_{\infty,\gcdcol}$; but the actual properties of $\chi$ do not allow us to perform this derivation, even in an indirect manner. Indeed, $(F_n)$ being F\o lner and $d$ being at least 2 do \emph{not} suffice to guarantee this convergence: these conditions do not even suffice to guarantee convergence of $\mu_{F_n,\cop}$ to $\mu_{\infty,\cop}$. See Remark~\ref{rem:folner}. We need the tightness assumption and this goes through the following easy lemma.

\vspace{0.2cm}

Denote by $f_\sharp \rho$ the pushforward by $f$ of a given measure $\rho$.

\begin{lem}
\label{lem:topolike}
Let $X$ and $Y$ be Polish spaces. Let $f:{X}\to {Y}$ be a continuous injective map that maps every Borel subset of $X$ to a Borel subset of $Y$. Let $(\mu_n)_{n\leq \infty}$ denote a sequence of probability measures on ${X}$. For $n\leq \infty$, set $\nu_n := f_\sharp \mu_n$. Assume that $\nu_n$ converges to $\nu_\infty$ and that $\mu_n$ converges to some probability measure $\mu$.

Then $\mu$ and $\mu_\infty$ are equal.
\end{lem}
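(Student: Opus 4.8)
The plan is to use the continuity of $f$ to transport the convergence $\mu_n\to\mu$ to the level of pushforwards, and then to invert $f$ measure-theoretically using injectivity together with the Borel-to-Borel hypothesis.

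\emph{First}, I would record that pushforward by a continuous map is continuous for the weak topology. For any bounded continuous $g:Y\to\R$, the composition $g\circ f$ is bounded and continuous on $X$, and by the change-of-variables formula for pushforwards one has $\int_Y g\, d(f_\sharp\mu_n)=\int_X (g\circ f)\, d\mu_n$. Since $\mu_n\to\mu$ weakly, the right-hand side converges to $\int_X (g\circ f)\, d\mu=\int_Y g\, d(f_\sharp\mu)$. As $g$ was an arbitrary bounded continuous test function, this shows $\nu_n=f_\sharp\mu_n$ converges weakly to $f_\sharp\mu$.

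\emph{Second}, by hypothesis $\nu_n$ also converges to $\nu_\infty=f_\sharp\mu_\infty$. Weak limits of probability measures on a Polish space are unique (the weak topology is metrizable, e.g.\ by the Lévy--Prokhorov metric, hence Hausdorff), so from the two convergences of $(\nu_n)$ I conclude $f_\sharp\mu=f_\sharp\mu_\infty$. \emph{Third}, I would recover $\mu$ and $\mu_\infty$ from their common pushforward: for an arbitrary Borel set $A\subseteq X$, the set $f(A)$ is Borel in $Y$ by assumption, and injectivity gives $f^{-1}(f(A))=A$, whence
$$
\mu(A)=\mu\big(f^{-1}(f(A))\big)=(f_\sharp\mu)(f(A))=(f_\sharp\mu_\infty)(f(A))=\mu_\infty(A).
$$
Since $A$ is arbitrary, $\mu=\mu_\infty$.

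The only genuinely delicate point is this last step, which is exactly where the two structural hypotheses on $f$ are consumed: the Borel-image property is needed so that $f(A)$ is an admissible argument for the pushforward measures, and injectivity is needed so that $f^{-1}(f(A))=A$ (without it one would only obtain an inequality). Everything else is the routine weak continuity of pushforward and the uniqueness of weak limits. I would note in passing that the Borel-image hypothesis is in fact automatic for a continuous injection between Polish spaces by the Lusin--Souslin theorem, but since it is assumed here there is no need to invoke that.
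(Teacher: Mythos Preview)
Your proof is correct and follows essentially the same route as the paper's: first use continuity of $f$ to get $f_\sharp\mu_n\to f_\sharp\mu$, then invoke uniqueness of weak limits to conclude $f_\sharp\mu=f_\sharp\mu_\infty$, and finally use injectivity together with the Borel-image hypothesis to recover $\mu(A)=\mu_\infty(A)$ for every Borel $A$. The only difference is that you spell out the intermediate justifications (weak continuity of pushforward, Hausdorffness of the weak topology) and add the remark about Lusin--Souslin, whereas the paper states the same steps more tersely.
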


\begin{proof}
By continuity of $f$, the sequence $\nu_n=f_\sharp \mu_n$ converges to $f_\sharp \mu$. As this sequence also converges to $f_\sharp \mu_\infty$, we have $f_\sharp \mu=f_\sharp \mu_\infty$. By injectivity of $f$, we have $\mu(A)=f_\sharp \mu (f(A))=f_\sharp \mu_\infty (f(A))=\mu_\infty (f(A))$, where the assumptions indeed guarantee that $f(A)$ is Borel.
\end{proof}

Notice that in Lemma~\ref{lem:topolike}, if one does not assume that $\mu_n$ converges to some probability measure, then one cannot deduce that $\mu_n$ converges to $\mu_\infty$. A counterexample goes as follows. Take ${X}=(0,1]$, ${Y}=\R/\Z$, and set $f:{X}\to {Y}$ to be reduction modulo 1. For $n\in \N^\star$, set $\mu_n=\delta_{1/n}$, and let $\mu_\infty=\delta_1$.

\vspace{0.38cm}

\begin{proofpropgcd}
Let $(F_n)$ and $(Y_n)$ be as in Proposition~\ref{prop:gcd}. By hypothesis, $\gcd(Y_n)$ is tight. Together with the assumption that $(F_n)$ is a F\o lner sequence, this implies that $d\geq 2$ and that for every $x\in \Z^d$, $\gcd(Y_n+x)$ is tight. Therefore, the ($\N$-indexed) sequence of random variables $(\gcd(Y_n+x))_{x\in \Z^d}$ is tight. Up to passing to a subsequence, we may thus assume that its distribution converges to some probability measure $\mu$ on $\Omega_\N$.

Let $X:=\Omega_{\N}$ and $Y:=\Omega_\SuperN$. For $n\in \N$, let $\mu_n:=\mu_{F_n,\gcdcol}$. Set $\mu_\infty:=\mu_{\infty,\gcdcol}$, which is a well-defined probability measure on $\Omega_{\N}$ because of the Borel--Cantelli Lemma \begin{small}($\sum_{p\in \Prime}p^{-d}<\infty$)\end{small}. Let $f:X\to Y$ be defined by $f(\sigma)=(\Psi(\sigma_x))_{x\in\Z^d}$. This map is injective, continuous, and maps Borel subsets of $X$ to Borel subsets of $Y$. By Lemma~\ref{lem:super}, $f_\sharp \mu_n$ converges to $f_\sharp \mu_\infty$. As $\mu_n$ converges to $\mu$, Lemma~\ref{lem:topolike} yields that $\mu_n$ converges to $\mu_\infty$, which is the desired result.
\end{proofpropgcd}

\subsection{Remarks}
\label{subsec:rem}

We insist that in none of our results, we ask for the sequence $(F_n)$ to be monotone, or for $\bigcup_n F_n$ to be equal to $\Z^d$. The fact that $|F_n|$ tends to infinity is a consequence of being F\o lner.

\vspace{0.2cm}

Even though the conclusion of Proposition~\ref{prop:gcd} implies that of Proposition~\ref{prop:coprime}, it is not the case that Proposition~\ref{prop:coprime} can be derived from Proposition~\ref{prop:gcd}. Indeed, one has the following fact.

\begin{fact}
\label{fact:cex}
For every $d\geq 1$, there is a F\o lner sequence $(F_n)$ of $\Z^d$ such that the following conditions hold:
\begin{itemize}
\item if $Y_n$ denotes a uniform element of $F_n$, the distribution of $\gcd(Y_n)$ is not tight,
\item the proportion of coprime vectors in $F_n$ converges to $1/\zeta(d)$.
\end{itemize}
\end{fact}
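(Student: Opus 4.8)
The plan is to build the sequence by hand. For $d=1$ there is nothing to prove: $1/\zeta(1)=0$, so for $F_n=\llbracket1,n\rrbracket$ the proportion of coprime vectors (those equal to $\pm1$) tends to $0=1/\zeta(1)$, while $\gcd(Y_n)=Y_n$ is uniform on $\llbracket1,n\rrbracket$ and hence not tight. So assume $d\ge2$, where the two bullets pull in opposite directions and the content of the statement lies.

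Two rigidities must be respected throughout. First, the coprime proportion can never exceed $1/\zeta(d)$ asymptotically: since $\{\gcd(Y_n)=1\}\subseteq\{\gcd(Y_n)\text{ coprime to }p_1\cdots p_k\}$ and, by Lemma~\ref{lem:chinese-folner}, the latter event has probability tending to $\prod_{i\le k}(1-p_i^{-d})$, one gets $\limsup_n\Pp[\gcd(Y_n)=1]\le 1/\zeta(d)$ for \emph{every} Følner sequence. Asking the proportion to \emph{reach} $1/\zeta(d)$ is thus a maximality constraint; a Möbius computation shows it is equivalent to $\lim_{k}\limsup_n\Pp[\gcd(Y_n)>1,\ \text{all prime factors}>p_k]=0$. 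Consequently the escaping \gcd's meant to provide non-tightness may \emph{not} be built from large primes only (that would break maximality), and, being non-coprime, they cannot simply be \emph{added} to a coprime-rich bulk (that would strictly lower the proportion below $1/\zeta(d)$): they must be produced by \emph{re-routing the non-coprime mass that is present anyway} — a proportion $1-1/\zeta(d)$, equal to the value we are trying to maintain — from small \gcd-values onto large ones.

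Second, the escaping mass cannot sit on divisibility by any single large modulus. A one-coordinate total-variation estimate (sum over residues mod $m$ of $|n_s-n_{s-e_1}|$, restricted to the cycle through $0$) gives, for every $m$, the clean bound
\[
\Pp[m\mid\gcd(Y_n)]\ \le\ \frac1m+\frac12\,\frac{|F_n\triangle(F_n+e_1)|}{|F_n|},
\]
so in a Følner sequence any fixed proportion carried by one modulus $m$ forces $m$ to stay bounded. Combined with the first rigidity, the only admissible mechanism is to route the forced non-coprime mass onto vertices whose \gcd\ is a small prime times a \emph{large} prime $q=q_n\to\infty$, spread over many such $q$ so that no individual modulus is over-represented. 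I would therefore keep a dominant bulk box $\llbracket1,N_n\rrbracket^d$ — which by \eqref{eq:classical} already delivers the proportion $1/\zeta(d)$ and a vanishing boundary ratio — and alter the law of $\gcd(Y_n)$ \emph{inside the non-coprime part only}, calibrated by the budget the inequality provides (a proportion $\epsilon$ on moduli of size $\sim q_n$ is compatible with boundary ratio as small as $\sim\epsilon(\log q_n)/q_n\to0$). The three conclusions would then be checked separately: non-tightness is immediate (the re-routed vertices have $\gcd\ge q_n\to\infty$); the coprime proportion is untouched because only non-coprime vertices were moved; and the Følner property follows from the boundary bookkeeping, using that Lemma~\ref{lem:chinese-folner} only constrains \emph{fixed} moduli, so divisibility by $q_n\to\infty$ is unconstrained in the limit.

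The step I expect to be the genuine difficulty — and the heart of the proof — is realizing this re-routing by an \emph{honest, thick} Følner set, rather than merely verifying that no inequality is violated. The locus of large \gcd, $\bigcup_{g\ge M}g\Z^d$, is sparse (its density tends to $0$), and its obvious carriers, the dilated boxes $g\cdot\llbracket1,\ell\rrbracket^d$, are maximally non-Følner: each of their points is isolated at distance one, whence $|F\triangle(F+e_1)|\asymp|F|$. A fixed-proportion decoration of that naive type destroys the Følner property, which is precisely why the freedom ``$q_n\to\infty$, spread over a whole block of primes'' is indispensable. Converting the budget inequality into an actual set that simultaneously (i) is Følner, (ii) leaves the coprime proportion exactly at $1/\zeta(d)$, and (iii) places a fixed mass on arbitrarily large \gcd\ is the crux, and it is there that the construction must be made fully explicit and all three constraints reconciled at once.
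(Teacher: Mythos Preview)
Your treatment of $d=1$ is fine. For $d\ge 2$, however, what you have written is a strategic analysis rather than a proof: you correctly isolate the constraints (the $1/\zeta(d)$ ceiling, the impossibility of parking mass on a single large modulus), you correctly conclude that the non-coprime mass must be spread over many large primes, and then you explicitly stop, declaring that ``the construction must be made fully explicit'' at the crux. That crux is the entire content of the fact, so as it stands the proposal has a genuine gap.

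The missing idea is simpler than your discussion suggests, and in particular does \emph{not} require any delicate decoration or budget-balancing. The F\o lner sets can be plain translated boxes $y^{n}+\llbracket 1,m_n\rrbracket^d$; the whole problem is to locate a translate $y^n$ at which the deterministic \textsc{gcd} profile already has the shape you want: the coprime points occupy a subset $U$ of density $\varepsilon$-close to $1/\zeta(d)$, and every non-coprime point has \textsc{gcd} at least $K$. Existence of such a translate is exactly what the paper extracts from the limiting measure $\mu_{\infty,\supercol}$: one first checks that in the random limit this box pattern occurs with positive probability (after a resampling that assigns one fresh large prime to each non-coprime point of the box, an event of positive density), and then invokes Theorem~\ref{thm:gcd} to conclude that the same pattern must occur at some actual translate in $\Z^d$. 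The paper also notes that a direct Chinese Remainder argument works. Either way, the point is that the ``re-routing'' you seek is not achieved by modifying a set --- it is achieved by \emph{choosing where to look}.

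This also means your worry about dilated boxes $g\cdot\llbracket 1,\ell\rrbracket^d$ being non-F\o lner is a red herring: no such pieces appear. Each $F_n$ is a single standard box, automatically F\o lner, and the three bullets you list are verified not by bookkeeping against an inequality budget but by the one-shot existence of the right translate.
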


\begin{proof} This fact can be derived from the Chinese Remainder Theorem or from the proof of Theorem~\ref{thm:gcd}. We will proceed in the second manner.

If $d=1$, any F\o lner sequence satisfies automatically the desired properties. Let us thus assume that $d\geq 2$, so that $\sum_{p\in\Prime} p^{-d}<\infty$.
Let $\varepsilon>0$ and $K\in \N$. One has
$$
\mu_{\infty,\supercol}\left(\left\{\sigma~:~\forall p,~\sigma(0)_p=0\right\}\right)=\prod_{p\in \Prime}(1-p^{-d})=1/\zeta(d).
$$
By translation invariance, one has
$$
\int \frac{|\{x\in \llbracket 1,N\rrbracket^d~:~\forall p,~\sigma(x)_p=0\}|}{N^d}\text{d}\mu_{\infty,\supercol}(\sigma)=1/\zeta(d).
$$
Either by ergodicity\footnote{Later in this section, we will see that $\mu_{\infty,\supercol}$ is ergodic, i.e.~that any translation invariant event has probability 0 or 1. One can then use an ergodic theorem such as Theorem~1.1 in \cite{lindenstrauss}.} or because of Proposition~\ref{prop:stochordering}, one can pick an arbitrarily large $N$ such that this proportion has a positive probability to be $\varepsilon$-close to $1/\zeta(d)$. Fixing such an $N$, one can thus find a deterministic subset $U$ of $\llbracket 1,N\rrbracket^d$ such that: $$\left|\frac{|U|}{N^d}-\frac{1}{\zeta(d)}\right|\leq \varepsilon~~\text{and}~~\mu_{\infty,\supercol}\left(\left\{\sigma~:~\forall x\in U,~\forall p\in\Prime,~\sigma(x)_p=0\right\}\right)>0.$$

Let $\mathfrak{s}$ be a random variable with distribution $\mu_{\infty,\supercol}$. As $d\geq 2$, one can pick some $P\geq \max(K,N)$ such that with positive probability, the following two conditions hold:
\begin{enumerate}
\item $\forall x\in U,~\forall p\in \Prime,~\mathfrak{s}(x)_p=0$,
\item $\forall x\in \llbracket 1,N\rrbracket^d,~\forall p\in \Prime,~(p\geq P\implies \mathfrak{s}(x)_p=0)$.
\end{enumerate}
Recall that $\mu_{\infty,\supercol}$ is defined in terms of the cosets $\mathcal{W}^n_p$. Let $\mathbf{p}$ denote an injective map from $\llbracket 1,N\rrbracket^d$ to $\{p\in \Prime : p\geq P\}$. We will use this injection to modify $\mathfrak{s}$ into a new random variable $\mathfrak{s}'$. For every $x\in \llbracket 1,N\rrbracket^d \backslash U$, resample $\left(\mathcal{W}_n^{\mathbf{p(x)}}\right)_n$ and condition $\mathcal{W}_1^{\mathbf{p(x)}}$ to contain $x$: do this independently of $\mathfrak{s}$ and independently for every $x\in \llbracket 1,N\rrbracket^d \backslash U$. The distribution of the ensuing random variable $\mathfrak{s}'$ has a density relative to $\mu_{\infty,\supercol}$.
As $\mathfrak{s}'$ almost surely satisfies the following conditions:
\begin{enumerate}
\item $\forall x\in U,~\prod_p p^{\mathfrak{s}'(x)_p}=1$,
\item $\forall x\in \llbracket 1,N\rrbracket^d\backslash U,~ \prod_p p^{\mathfrak{s}'(x)_p}\geq K$,
\end{enumerate}
the corresponding conditions are satisfied by $\mathfrak{s}$ with positive probability. Denote the corresponding event by $E$.

By Theorem~\ref{thm:gcd}, the proportion of $x\in \llbracket 1,n\rrbracket^d$ such that $\tau_{-x}\supercol$ satisfies $E$ converges to a positive number, hence is positive for $n$ large enough. In particular, there is some $y\in \Z^d$ such that $\tau_{-y}\supercol$ satisfies $E$. We say that any such $y$ is a $(K,N,\varepsilon)$-counterexample. For every $n$, pick some $y^{n}$ that is an $(n,m_n,1/n)$-counterexample for some $m_n\geq n$: the sequence $F_n=\prod_{i=1}^d\llbracket y_i^{n} +1, y_i^{n}+m_n\rrbracket$ satisfies the desired properties.
\end{proof}

\begin{rem}
\label{rem:folner}
Likewise, the $1/\zeta(d)$-condition cannot be removed from Proposition~\ref{prop:coprime}: it is well-known that the Chinese Remainder Theorem implies that there are arbitrarily large boxes $F_n={x_n} + \llbracket 0, N\rrbracket^d$ in $\Z^d$ devoid of coprime points. This also follows from Theorem~\ref{thm:coprime}, as $\mu_{\infty,\cop}$ has a positive probability to colour black the whole box $\llbracket 0, N\rrbracket^d$ --- use one prime per point of the box. Such a sequence $(F_n)$ is F\o lner and yet $\mu_{F_n,\cop}$ converges $\delta_\mathbf{0}$.
The F\o lner condition cannot be removed from Proposition~\ref{prop:coprime} either.
\end{rem}

\vspace{0.2cm}

All the results in this paper concerning visibility extend readily to lattices in $\R^d$, as any such lattice may be mapped to $\Z^d$ by a linear automorphism of $\R^d$, which preserves visibility.

\vspace{0.2cm}

Notice that $\mu_{\infty,\cop}$ and $\mu_{\infty,\gcdcol}$ are not only translation invariant but also $\mathsf{GL}_d(\Z)$-invariant. Even though $\cop$ and $\gcdcol$ are indeed $\mathsf{GL}_d(\Z)$-invariant, the $\mathsf{GL}_d(\Z)$-invariance of these measures does \emph{not} follow from Theorems~\ref{thm:gcd} and \ref{thm:coprime}. One way to understand why goes as follows. Every orbit of the multiplicative group $G$ generated by $\begin{pmatrix} 
1 & 1 \\
0 &  1
\end{pmatrix}$ contains a unique point inside $A:=\{x\in \Z^2:0\leq x_1<|x_2|\}\cup (\Z\times\{0\})$. Consider the colouring of $\Z^2$ which is defined as a the chessboard colouring on $A$ \begin{small}(say white if the sum of coordinates is odd and black otherwise)\end{small}, and take its unique $G$-invariant extension. For $F_n=A~\cap~\llbracket -n,n\rrbracket^2$, this colouring seen from a uniform point in $F_n$ converges to the unbiased choice of one of the two chessboard colourings of the plane: this probability measure is \emph{not} $G$-invariant.

\vspace{0.2cm}

Finally, let us mention a few questions that can be asked only with the formalism of local limits, i.e.~with $\mu_{\infty,\cop}$ or $\mu_{\infty,\gcdcol}$ instead of simply one convergence result per cylindrical event as in \cite{pleasantshuck}. For every $c\in\{\cop,\gcdcol,\profcol,\supercol\}$, we can ask whether the measure $\mu_{\infty,c}$ is ergodic --- i.e.~if every translation invariant measurable subset has $\mu_{\infty,c}$-probability 0 or 1. These measures are indeed ergodic. For simplicity, we expose the argument for $\mu_{\infty,\cop}$. The Chinese Remainder Theorem guarantees the following fact: if $p_1,\dots,p_n$ denote the first $n$ primes, and if for every $i\leq n$, $C_i$ denotes a coset of $p_i \Z^d$ in $\Z^d$, then there is a translation mapping every $C_i$ to $p_i\Z^d$. As a result, whenever we consider only finitely many primes, choosing one coset per prime yields a \emph{deterministic} outcome up to translation. One concludes by noting that if a translation invariant probability measure on $\{0,1\}^{\Z^d\times \Prime}$ yields ergodic measures in projection to any $\{0,1\}^{\Z^d\times \{p_1,\dots,p_n\}}$, then the measure under study is itself ergodic. This is easily proved by martingale theory, and similar reasonings are classical in the study of profinite actions.

\vspace{0.2cm}

Considering $\Z^d$ to be endowed with its usual \begin{small}(hypercubic)\end{small} graph structure, one may also ask questions of percolation theory \cite{grimmettbook, lyonsperes}: how many infinite white (resp. black) connected components does the colouring $\mu_{\infty,\cop}$ yield? By ergodicity, these numbers have to be deterministic outside some event of probability zero. One can derive from Theorem 3.3 in \cite{vardi} that, for $d=2$ hence for any $d\geq 2$, there is at least one infinite white connected component almost surely. One can derive from Theorem 3.4 in \cite{vardi} that, for $d=2$, there is almost surely at most one infinite white connected component and no infinite black component.

\section{Further generalisations of Theorem~\ref{thm:gcd}}
\label{sec:further}

\subsection{Sampling along affine subspaces}

\newcommand{\gcdinfty}{\mathsf{gcd}_\infty}

Proposition~\ref{thm:gcdaffine} is the analog of Proposition~\ref{prop:gcd} for an observer picked uniformly in an affine subspace of $\Z^d$ rather than in the whole space $\Z^d$.

\vspace{0.2 cm}

Endow $\llbracket 0,\infty\rrbracket$ with its usual topology. Define $\gcdinfty \in \Omega_{\llbracket 0,\infty\rrbracket}$ by setting $\gcdinfty(x):=\gcd(x)$. 
Let $({\mu_n})_{n\leq\infty}$ denote a sequence of probability measures on $\Omega_{\llbracket 0,\infty\rrbracket}$. Let $V\subset \Z^d$ and denote by $\pi$ the projection from $\llbracket 0,\infty\rrbracket^{\Z^d}$ to $\llbracket 0,\infty\rrbracket^{V}$. We say that $\mu_n$ converges to $\mu_\infty$ \defini{on $V$} if $\pi_\sharp \mu_n$ --- the pushforward of $\mu_n$ by $\pi$ --- converges to $\pi_\sharp \mu_\infty$ when $n$ goes to infinity.

Given $\Gamma$ an infinite subgroup of $\Z^d$, one defines the probability measure $\mu_{\infty,\gcdinfty}^{\Gamma}$ by taking the definition of $\mu_{\infty,\gcdcol}$ but asking furthermore at every step that $\mathcal{W}_n^p$ intersects $\Gamma$. This corresponds to taking a Haar distributed element in the closure of $\Gamma$ in $\hat{\Z}^d$.

Finally, we say that a sequence $(F_n)$ of nonempty finite subsets of $\Gamma$ is a \defini{F\o lner sequence for $\Gamma$} if for every ${y}\in \Gamma$, one has $|F_n\Delta (F_n+{y})|=o(|F_n|)$.

\begin{prop}
\label{thm:gcdaffine}
Let $\Gamma$ denote a subgroup of $\Z^d$ of rank $k\geq 1$ which is maximal among subgroups of rank $k$. Let $(F_n)$ be a F\o lner sequence for $\Gamma$. Let $Y_n$ denote a uniform element of $F_n$.

Then, $\mu_{F_n,\gcdinfty}$ converges to $\mu_{\infty,\gcdinfty}^{\Gamma}$ on $\Z^d\backslash \Gamma$.
If furthermore $k=1$ or $\gcd(Y_n)$ is tight, then $\mu_{F_n,\gcdinfty}$ converges to $\mu_{\infty,\gcdinfty}^{\Gamma}$.
\end{prop}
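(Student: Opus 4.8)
The plan is to run the three-step scheme of Section~\ref{subsec:gcd} --- reduction modulo $N$, then $\profcol$ and $\supercol$, then $\gcdinfty$ --- but adapted from $\Z^d$ to the sublattice $\Gamma$, isolating the two places where $\Gamma$ being a proper summand changes the behaviour. The first move is to reduce $\Gamma$ to a convenient shape. The maximality hypothesis says exactly that $\Gamma$ is saturated, i.e.\ that $\Z^d/\Gamma$ is torsion-free, so $\Gamma$ is a direct summand of $\Z^d$: there is a basis of $\Z^d$ in which $\Gamma=\Z^k\times\{0\}^{d-k}$. Since $\gcd$ is $\GL_d(\Z)$-invariant, and since a $\GL_d(\Z)$-change of basis maps F\o lner sequences for $\Gamma$ to F\o lner sequences for its image, carries $\Z^d\setminus\Gamma$ to the complement of the image, and transports the defining cosets of $\mu_{\infty,\gcdinfty}^{\Gamma}$ equivariantly (as $p^n\Z^d$ is preserved and ``intersects $\Gamma$'' becomes ``intersects the image''), I may assume $\Gamma=\Z^k\times\{0\}^{d-k}$ throughout, so that $\overline{\Gamma}=\Zhat^k\times\{0\}^{d-k}$.

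Next I establish the analogues of Lemmas~\ref{lem:chinese-folner}, \ref{lem:zhat} and \ref{lem:super}. Because $\Gamma\cong\Z^k$, applying Lemma~\ref{lem:chinese-folner} inside $\Z^k$ shows that the reduction of $Y_n$ modulo $N$ equidistributes over $\pi_N(\Gamma)$, so that $\Phi(Y_n)$ converges to the Haar measure of $\overline{\Gamma}$. The proofs of Lemma~\ref{lem:zhat} and Lemma~\ref{lem:super} then go through verbatim, with Haar measure on $\Zhat^d$ replaced by Haar measure on $\overline{\Gamma}$ and the relation $\sigma(x)=\sigma(\vec 0)+\profcol(x)$ still holding on a closed set of full measure. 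This yields that $\mu_{F_n,\supercol}$ converges to the law $\mu_{\infty,\supercol}^{\Gamma}$ of $x\mapsto\gcd(Y+\profcol(x))$ for $Y$ Haar on $\overline{\Gamma}$.

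The key observation, which gives convergence on $\Z^d\setminus\Gamma$ with no tightness assumption, is the following. For $x\notin\Gamma$ at least one transverse coordinate $x_j$ (with $j>k$) is a fixed nonzero integer, and every $Y_n\in\Gamma$ leaves that coordinate untouched, so $\gcd(Y_n+x)$ divides the fixed integer $\gcd(x_{k+1},\dots,x_d)$; the same bound holds in the limit. Hence, restricted to any finite subset of $\Z^d\setminus\Gamma$, the $\gcdinfty$-values range over a fixed finite set, on which the map $\chi$ (discontinuous in general) is trivially continuous. Composing with the already-established convergence of $\supercol$ gives convergence of $\mu_{F_n,\gcdinfty}$ on $\Z^d\setminus\Gamma$.

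Finally I would upgrade to convergence everywhere. If $k\geq 2$ and $\gcd(Y_n)$ is tight, then the F\o lner property propagates tightness to $\gcd(Y_n+x)$ for every $x\in\Gamma$, the whole profile is tight in $\Omega_\N$, and I conclude exactly as in Proposition~\ref{prop:gcd} by applying Lemma~\ref{lem:topolike} to the injective continuous Borel map $\sigma\mapsto(\Psi(\sigma_x))_x$. If instead $k=1$, tightness fails on $\Gamma$ by design: writing $Y_n$ and $x\in\Gamma$ as integer multiples $m_n$ and $j$ of the generator of $\Gamma\cong\Z$, one has $\gcd(Y_n+x)=|m_n+j|\to\infty$ in distribution because $(F_n)$ is F\o lner in $\Z$, while in the limit $\gcd(Y+\profcol(x))$ has infinitely many prime factors almost surely (as $\sum_{p}p^{-1}=\infty$), hence $\chi$-value $\infty$. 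Since the $\Gamma$-coordinates of both the prelimit and the limit thus converge to the deterministic configuration $\equiv\infty$ in the compact space $\llbracket 0,\infty\rrbracket$, combining this with the convergence on $\Z^d\setminus\Gamma$ (which pins down all other coordinates) yields joint convergence of the full profile by a Slutsky-type argument. I expect the main obstacle to be this middle step: recognising that the fixed transverse coordinates confine the $\gcd$ to divisors of a fixed integer, so that the otherwise fatal discontinuity of $\chi$ becomes harmless off $\Gamma$, and, for $k=1$, checking that the genuine escape to $\infty$ on $\Gamma$ is precisely matched by the infinite supernatural $\gcd$ of a Haar profinite number.
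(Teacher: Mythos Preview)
Your proposal is correct and follows essentially the same approach as the paper: establish the $\supercol$-level convergence for $\Gamma$, use a deterministic bound on $\gcd(Y_n+x)$ for $x\notin\Gamma$ to make $\chi$ continuous off $\Gamma$, then handle $\Gamma$ itself via tightness (when $k\geq 2$) or via $\gcd(Y_n+y)\to\infty$ matched by the Borel--Cantelli argument (when $k=1$). The only cosmetic difference is that you perform the $\GL_d(\Z)$-change of basis at the outset and read off the divisor bound $\gcd(x_{k+1},\dots,x_d)$ directly, whereas the paper packages the same change of basis into Lemma~\ref{lem:gcd} and phrases the bound as $d(x,\Gamma)$ for a suitable norm.
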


\begin{rem}
\label{rem:gamma}
The view from a ``uniform point'' in an \emph{affine} subspace $\Gamma+{y}$ is just the view seen from a ``uniform point'' in $\Gamma$ shifted by $-{y}$. One may also notice that if one starts with a group $\Gamma$ that is not maximal given its rank, then it lies in a unique such group, which is the intersection of its \begin{small}(rational or real)\end{small} linear span with $\Z^d$ : denote it by $\tilde{\Gamma}$. It follows from the proof of Proposition~\ref{thm:gcdaffine} that the following holds. Let $(F_n)$ be a F\o lner sequence for $\Gamma$. Let $Y_n$ denote a uniform element of $F_n$. Assume that for every ${y}\in \tilde{\Gamma}$, $\gcd(Y_n+y)$ is tight. Then $\mu_{F_n,\gcdinfty}$ converges to $\mu_{\infty,\gcdinfty}^{\Gamma}$. Actually, it suffices to make the assumption for a system of representatives of ${y}\in \tilde{\Gamma}$ for the equivalence relation ``being equal modulo $\Gamma$'', i.e.~for finitely many ${y}$'s.
\end{rem}

\begin{coro}
\label{coro:gcdaffine}
Let $\Gamma$ denote a subgroup of $\Z^d$ of rank $k\geq 1$ which is maximal among subgroups of rank $k$. Let $F$ denote a bounded subset of the linear span of $\Gamma$. For every $r\in (0,\infty)$, set $F_r := \{x\in\Gamma~:~r^{-1}x\in F\}$. Assume that $\frac{|F_r|}{r^k}$ converges to a nonzero limit when $r$ tends to infinity and that $(F_n)$ is a F\o lner sequence for $\Gamma$.

Then, $\mu_{F_n,\gcdinfty}$ converges to $\mu_{\infty,\gcdinfty}^{\Gamma}$ as $n$ goes to infinity.
\end{coro}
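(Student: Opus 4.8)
The plan is to deduce the corollary directly from Proposition~\ref{thm:gcdaffine}, whose F\o lner hypothesis is granted (note that, $F$ being merely bounded, this has to be assumed outright here, unlike in the convex setting of Theorem~\ref{thm:gcd} where it comes for free). The proposition gives convergence on $\Z^d\backslash\Gamma$ unconditionally, and full convergence as soon as its side condition ``$k=1$ or $\gcd(Y_n)$ tight'' holds. When $k=1$ there is nothing to do: the first alternative is satisfied and Proposition~\ref{thm:gcdaffine} already yields that $\mu_{F_n,\gcdinfty}$ converges to $\mu_{\infty,\gcdinfty}^{\Gamma}$. So I would assume $k\geq 2$ from now on and reduce the whole corollary to proving that the distribution of $\gcd(Y_n)$ is tight; plugging this into Proposition~\ref{thm:gcdaffine} then closes the argument.

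To obtain tightness I would first pass to the model case $\Gamma=\Z^k\times\{0\}^{d-k}$. Since $\Gamma$ is maximal among subgroups of rank $k$, it is saturated, i.e.\ $\Z^d/\Gamma$ is torsion-free; hence $\Gamma$ is a direct summand of $\Z^d$ and possesses a basis that extends to a basis of $\Z^d$. Equivalently there is $A\in\GL_d(\Z)$ with $A\Gamma=\Z^k\times\{0\}^{d-k}$. The key point is that $\gcd(x_1,\dots,x_d)$ is the nonnegative generator of the subgroup $x_1\Z+\dots+x_d\Z$ of $\Z$, so it is invariant under $\GL_d(\Z)$; thus $\gcd(Y_n)=\gcd(AY_n)$. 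Now $AY_n$ is uniform on $AF_r=\{v\in\Z^k:r^{-1}v\in AF\}$ once the linear span of $\Gamma$ is identified with $\R^k$ via $A$, and since $A$ is a bijection one has $|AF_r|=|F_r|$, so $|AF_r|/r^k$ still tends to the same nonzero limit. Moreover $AY_n\in\Z^k\times\{0\}^{d-k}$, so its $\gcd$ in $\Z^d$ equals the $\gcd$ in $\Z^k$ of its first $k$ coordinates.

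This places us exactly in the hypotheses of (\ref{eq:gcdfolklore}) in dimension $k$: the set $AF$ is bounded in $\R^k$ and $|AF_r|/r^k$ converges to a nonzero limit, with $k\geq 2$. Hence (\ref{eq:gcdfolklore}) applies and shows that $\gcd(AY_n)$ converges in distribution to a zeta distribution of parameter $k$. As this limit is an honest probability measure on $\N^\star$ with no escape of mass to infinity, the family $\big(\gcd(AY_n)\big)_n=\big(\gcd(Y_n)\big)_n$ is tight, which is precisely what Proposition~\ref{thm:gcdaffine} requires; the corollary follows. I do not expect a genuine obstacle here, since the substance sits in Proposition~\ref{thm:gcdaffine} and in the classical statement (\ref{eq:gcdfolklore}); the only step demanding care is the reduction, where one must simultaneously use the saturatedness of $\Gamma$ (to realise it as a direct summand, so that the classical counting applies) and the $\GL_d(\Z)$-invariance of $\gcd$ (to transport the computation to the standard coordinates without distorting the labels).
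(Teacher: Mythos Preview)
Your proposal is correct and follows essentially the same approach as the paper: reduce via Proposition~\ref{thm:gcdaffine} to checking tightness of $\gcd(Y_n)$ when $k\geq 2$, use the $\GL_d(\Z)$-invariance of $\gcd$ together with maximality of $\Gamma$ to pass to the model case $\Gamma=\Z^k\times\{0\}^{d-k}$, and then invoke (\ref{eq:gcdfolklore}). The paper's own proof is the same argument stated more tersely; your version simply spells out why maximality yields a direct summand and why the $\GL_d(\Z)$ change of coordinates is legitimate.
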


\begin{proofthmaff}
Exactly as in Section~\ref{subsec:gcd}, one defines $\mu^\Gamma_{\infty,\supercol}$ and proves that $\mu^\Gamma_{F_n,\supercol}$ converges to $\mu^\Gamma_{\infty,\supercol}$. Let $d(\cdot,\Gamma)$ be as in the conclusion of Lemma~\ref{lem:gcd}. Let $X:=\prod_{x\in \Z^d\backslash \Gamma} \psi(\llbracket 1,d(x,\Gamma)\rrbracket)$. As the closed subset $X \subset Y:= \SuperN^{\Z^d\backslash \Gamma}$ has probability 1 for every measure $\mu^\Gamma_{F_n,\supercol}$, it is the case that $\mu^\Gamma_{\infty,\supercol}(X)=1$. Therefore, one can also consider these measures as probability measures on $X$: with this point of view, denote them by $\mu_n$, with $n\leq \infty$. One has that $\mu_n$ converges to $\mu_\infty$ --- that is for the weak topology on probability measures \emph{on $X$}. One way to see this is to notice that as $X$ is compact, every subsequence of $(\mu_n)$ admits a converging subsequence and then to apply Lemma~\ref{lem:topolike} to the restriction of the identity map from $X$ to $Y$. As the map $f:X\to \llbracket 0,\infty\rrbracket^{\Z^d\backslash \Gamma}$ defined by $f(\sigma)_x:=\prod_{p\in\Prime} p^{\sigma(x)_p}$ is continuous, one has that $\mu_{F_n,\gcdinfty}$ converges to $\mu_{\infty,\gcdinfty}^{\Gamma}$ on $\Z^d\backslash \Gamma$.

Let us now assume that $k=1$. As $\sum_{p\in\Prime}\frac{1}p=\infty$, the second Borel--Cantelli Lemma yields that $\mu^\Gamma_{\infty,\supercol}$-almost every configuration gives the label $\infty$ to every element of $\Gamma$. If $Y_n$ denotes a uniformly chosen element of $F_n$ and $y$ some element of $\Gamma$, as $k$ is equal to $1$, one has that $\gcd(Y_n+y)$ converges in probability to $\infty$. Together with convergence on $\Z^d\backslash \Gamma$, this implies that $\mu_{F_n,\gcdinfty}$ converges to $\mu_{\infty,\gcdinfty}^{\Gamma}$.

Instead of $k=1$, now assume that $\gcd(Y_n)$ is tight. As $(F_n)$ is a F\o lner sequence for $\Gamma$, one has that for every $y\in \Gamma$, the random variable $\gcd(Y_n+y)$ is tight. Together with convergence on $\Z^d\backslash \Gamma$, this implies that the sequence $\left(\mu_{F_n,\gcdinfty}\right)$ is tight. One then concludes as in the proof of Proposition~\ref{prop:gcd}.

\end{proofthmaff}

\begin{lem}
\label{lem:gcd}
Let $(d,k)$ satisfy $1\leq k \leq d$. Let $\Gamma$ be a subgroup of $\Z^d$ of rank $k$ and maximal with this property.
Then, there is a norm $\|~\|$ on $\R^d$ such that for every $N\geq 1$ and every $x\in N\Z^d$, one has
$$
d(x,\Gamma)>0\implies d(x,\Gamma)\geq N,
$$
where $d(x,\Gamma):=\min\{\|x-y\|:y\in \Gamma\}$.
\end{lem}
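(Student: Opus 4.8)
The plan is to use the maximality hypothesis to bring $\Gamma$ into a standard position inside $\Z^d$, and then to read the conclusion off a well-chosen sup-norm. First I would observe that being maximal among rank-$k$ subgroups is equivalent to $\Gamma$ being a \emph{primitive} (saturated) sublattice, i.e. $\Gamma=(\mathbb{Q}\Gamma)\cap\Z^d$. Indeed, $(\mathbb{Q}\Gamma)\cap\Z^d$ has rank $k$ and contains $\Gamma$, so maximality forces equality; conversely, if $\Gamma'\supseteq\Gamma$ has rank $k$ then $\mathbb{Q}\Gamma'=\mathbb{Q}\Gamma$, whence $\Gamma'\subseteq(\mathbb{Q}\Gamma)\cap\Z^d=\Gamma$. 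For such a $\Gamma$ the quotient $\Z^d/\Gamma$ is torsion-free (if $nv\in\Gamma$ then $v\in\mathbb{Q}\Gamma\cap\Z^d=\Gamma$), hence free, so the inclusion $\Gamma\hookrightarrow\Z^d$ splits and there is $g\in\GL_d(\Z)$ with $g\Gamma=\Z^k\times\{0\}^{d-k}$.

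Next I would define the norm by transporting the sup norm through $g$: set $\|v\|:=\|gv\|_\infty=\max_{1\le i\le d}|(gv)_i|$, which is a genuine norm on $\R^d$ since $g$ is invertible. Because $g$ is linear and sends $\Gamma$ onto $\Z^k\times\{0\}^{d-k}$, for every $x$ one has
$$
d(x,\Gamma)=\min_{w\in\Gamma}\|g(x-w)\|_\infty=\min_{z\in g\Gamma}\|gx-z\|_\infty=\max\Big(\,\delta(x),\ \max_{i>k}|(gx)_i|\Big),
$$
where $\delta(x):=\min_{z_1,\dots,z_k\in\Z}\max_{i\le k}|(gx)_i-z_i|$; the separation is valid because the coordinates of $z\in g\Gamma$ beyond the $k$-th vanish, so the term $\max_{i>k}|(gx)_i|$ does not depend on $z$.

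Finally I would invoke the hypothesis $x\in N\Z^d$. As $g\in\GL_d(\Z)$ preserves $N\Z^d$, every coordinate $(gx)_i$ lies in $N\Z$; in particular the first $k$ coordinates are integers, so choosing $z_i=(gx)_i$ gives $\delta(x)=0$ and hence $d(x,\Gamma)=\max_{i>k}|(gx)_i|$. If this is positive, some $(gx)_i$ with $i>k$ is a nonzero multiple of $N$, so $d(x,\Gamma)\ge N$, as required. The only non-elementary ingredient is the splitting of $\Gamma\hookrightarrow\Z^d$, i.e. the identification of maximal rank-$k$ subgroups with direct summands; once $\Gamma$ is in the normal form $\Z^k\times\{0\}^{d-k}$ the remainder is a direct computation, and I expect no genuine difficulty beyond this bookkeeping.
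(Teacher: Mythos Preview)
Your proposal is correct and follows essentially the same approach as the paper: reduce via a $\GL_d(\Z)$-change of coordinates to the case $\Gamma=\Z^k\times\{0\}^{d-k}$, noting that such automorphisms preserve $N\Z^d$, and then observe the result is immediate for the sup norm. You spell out in full the two points the paper leaves to the reader---why maximality yields the splitting (via torsion-freeness of $\Z^d/\Gamma$) and the explicit distance computation---but the strategy is identical.
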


\begin{proof}
By a change of coordinates and by maximality of $\Gamma$, one may assume that $\Gamma=\Z^k\times\{0_{d-k}\}$, in which case the lemma is clear. \begin{small}Notice that any element of $\mathsf{GL}_d(\Z)$ maps $N\Z^d$ precisely to itself.\end{small}
\end{proof}


\vspace{0.2cm}

\begin{proofcoraff}
Let $Y_n$ denote a uniform element of $F_n$.
By Proposition~\ref{thm:gcdaffine}, it suffices to assume that $k\geq 2$ and to show that $\gcd(Y_n)$ is tight. Since \textsc{gcd}'s are unchanged by $\mathsf{GL}_d(\Z)$, we may assume that $\Gamma$ is equal to $\Z^k\times\{0_{d-k}\}$, and tightness results from (\ref{eq:gcdfolklore}).
\end{proofcoraff}

\subsection{Graphon and local-graphon limits}

This section is devoted to \begin{small}(variations of)\end{small} the following question: if $X_1,\dots, X_N$ are sampled ``uniformly'' and independently in $\Z^d$, can we describe the set of $(i,j)$'s such that $X_i$ is visible from $X_j$, i.e.~such that $\gcd(X_i-X_j)=1$?

\vspace{0.2cm}

A \defini{``graphon''} is the data of a standard probability space $(\mathfrak{X},\mathbb{P})$ together with a measurable function $f:\mathfrak{X}^2\to [0,1]$ that is symmetric, i.e.~satisfies $\forall (x_1,x_2) \in \mathfrak{X}^2,~f(x_1,x_2)=f(x_2,x_1)$. Two ``graphons'' $(\mathfrak{X},\mathbb{P}_\mathfrak{X},f)$ and $(\mathfrak{Y},\mathbb{P}_\mathfrak{Y}, g)$ are said to \defini{induce the same graphon} if, up to throwing away sets of measure zero, there is a measure-preserving isomorphism $\pi:(\mathfrak{X},\mathbb{P}_\mathfrak{X})\to (\mathfrak{Y},\mathbb{P}_\mathfrak{Y})$  such that $\forall (x_1,x_2)\in \mathfrak{X},~f(x_1,x_2)=g(\pi(x_1),\pi(x_2))$. A \defini{graphon} is an equivalence class of ``graphons'' for the relation ``inducing the same graphon''. We say that a graphon is \defini{represented} by any ``graphon'' that induces it. See \cite{graphons}.

Let $\mathcal{G}_n=(V_n,E_n)$ denote a sequence of random\footnote{One does not need $\mathcal{G}_n$ and $\mathcal{G}_m$ to be defined on the same probability space.} finite graphs such that $|V_n|$ converges in probability to infinity. It is said to \defini{converge} to the \begin{small}(deterministic)\end{small} graphon represented by $(\mathfrak{X},\mathbb{P},f)$ if the following holds: for every $k$, if $(X_1^n,\dots,X_k^n)$ denotes a uniform element of $V_n^k$, then the random variable $(\mathds{1}_{\{X_i^n,X_j^n\}\in E_n})_{1\leq i < j \leq k}$ converges in distribution to \begin{small}(the distribution of)\end{small} the random variable $(f(X_i^\infty,X_j^\infty))_{1\leq i < j \leq k}$, where the $X_i^\infty$'s are independent random variables of distribution $\mathbb{P}$. See \cite{diaconisjanson}.

Consider the following standard probability space $\stand:=\prod_{p\in \Prime} (\Z/p\Z)^d$, endowed with the product of uniform measures. Consider the measurable function $\delta: \stand^2\to [0,1]$ defined by $\delta(x_1,x_2):=\mathds{1}_{\forall p,~x_1(p)\not=x_2(p)}$. See Figure~\ref{fig:graphon}.

\begin{figure}[!ht]
\centering
\includegraphics[width=6.5cm]{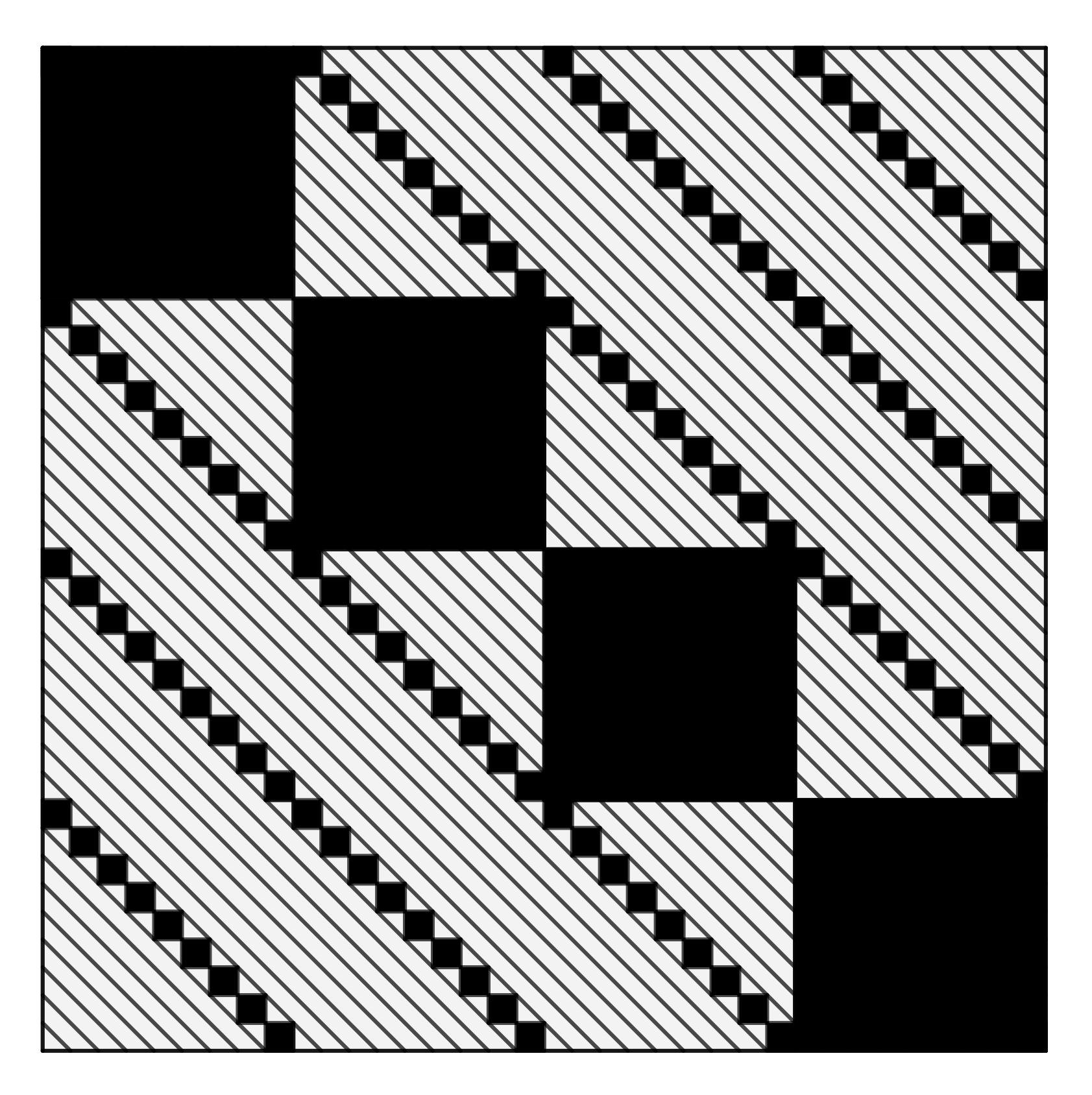}
\footnotesize
\
\caption{Visualisation of the graphon $(\stand,\delta)$ for $d=2$. After suitable identification of $\stand$ with $[0,1]$, the set $\delta^{-1}(\{0\})$ is represented in black. Each square at scale $n$ is divided into $p_n^d\times p_n^d$ smaller squares, where $(p_n)$ denotes the sequence of prime numbers.}
\label{fig:graphon}
\end{figure}

By the same arguments as in Section~\ref{subsec:coprime}, one can prove the following result.

\begin{prop}
\label{prop:graphon}
Let $d\geq 1$ and let $(F_n)$ be a F\o lner sequence of $\Z^d$. Assume that the probability that a uniform element of $F_n$ is coprime converges to $1/\zeta(d)$. Let $\mathcal{G}_n$ denote the random graph with vertex-set $F_n$ and an edge between two distinct vertices if and only if the one is visible from the other.

Then, $\mathcal{G}_n$ converges to the graphon represented by $(\stand,\delta)$.
\end{prop}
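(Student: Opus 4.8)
The plan is to transcribe the three-step strategy of Section~\ref{subsec:coprime} to the setting of $k$ independent observers. Fix $k$ and let $(X_1^n,\dots,X_k^n)$ be uniform in $F_n^k$. For each point record its reductions modulo every prime, i.e.\ its image $\bar X_i^n\in\stand=\prod_{p\in\Prime}(\Z/p\Z)^d$ under coordinatewise reduction. By Lemma~\ref{lem:chinese-folner} applied to each modulus $N=p_1\cdots p_r$ (together with the fact that a map into a product space is continuous as soon as all its coordinate maps are), the single reduction $\bar X_i^n$ converges in distribution to the Haar measure on $\stand$, exactly as in Lemma~\ref{lem:coprime}; since the $X_i^n$ are independent and weak convergence is stable under finite products, the whole vector $(\bar X_1^n,\dots,\bar X_k^n)$ converges to $(X_1^\infty,\dots,X_k^\infty)$, i.i.d.\ of law $\mathbb P$ on $\stand$. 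The point of this reduction is that the edge indicator is read off exactly from it: for $i\neq j$ one has $\mathds 1_{\{X_i^n,X_j^n\}\in E_n}=\mathds 1_{\gcd(X_i^n-X_j^n)=1}=\delta(\bar X_i^n,\bar X_j^n)$, because $\gcd(x-y)=1$ means precisely $x\not\equiv y \bmod p$ for every prime $p$.

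Now comes the obstruction already flagged on page~\pageref{subtil1}: the map $\delta(x_1,x_2)=\min_p\mathds 1_{x_1(p)\neq x_2(p)}$ is an infinite conjunction and is only \emph{upper semicontinuous} on $\stand^2$, so one cannot simply push $(\bar X_i^n,\bar X_j^n)\to(X_i^\infty,X_j^\infty)$ through it. I would handle this exactly as in the proof of Proposition~\ref{prop:stochordering}. Each per-prime indicator $(x_1,x_2)\mapsto\mathds 1_{x_1(p)\neq x_2(p)}$ \emph{is} continuous (it is clopen), so the refined data $(\mathds 1_{\bar X_i^n(p)\neq\bar X_j^n(p)})_{i<j,\,p}$ converges; passing to a subsequence and extracting a joint limit of this refined data together with the finite edge vector $(\mathds 1_{\{X_i^n,X_j^n\}\in E_n})_{i<j}$ (the latter lives in the finite set $\{0,1\}^{\binom{k}{2}}$, so compactness is free), the clopen inequalities $\mathds 1_{\{X_i^n,X_j^n\}\in E_n}\le\mathds 1_{\bar X_i^n(p)\neq\bar X_j^n(p)}$ pass to the limit for every $p$. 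Hence every subsequential limit $E'=(E'_{ij})_{i<j}$ is coordinatewise dominated by $(\delta(X_i^\infty,X_j^\infty))_{i<j}$.

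It then remains to match marginals, as in Proposition~\ref{prop:coprime}. Since $E'_{ij}$ and $\delta(X_i^\infty,X_j^\infty)$ are both $\{0,1\}$-valued with $E'_{ij}\le\delta(X_i^\infty,X_j^\infty)$, equality a.s.\ — hence equality of the full joint law, i.e.\ the desired convergence along the subsequence, and then along the whole sequence — follows as soon as the two expectations agree. By independence across primes, $\E[\delta(X_i^\infty,X_j^\infty)]=\prod_{p\in\Prime}(1-p^{-d})=1/\zeta(d)$, so the proof reduces to establishing the edge-density convergence $\Pp[\gcd(X_i^n-X_j^n)=1]\to 1/\zeta(d)$.

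This last point is where I expect the real work, and it is the analogue of the $1/\zeta(d)$ input in Proposition~\ref{prop:coprime}, except that it is now a genuinely \emph{two-point} statement rather than the one-point hypothesis of the proposition (the latter is about $\gcd(Y_n)$, not $\gcd(Y_n-Y_n')$). The upper semicontinuous half is free: truncating to the primes below $M$ gives $\limsup_n\Pp[\gcd(X_i^n-X_j^n)=1]\le\prod_{p<M}(1-p^{-d})$, which tends to $1/\zeta(d)$. For the matching lower bound I would expand by Möbius inversion, $\Pp[\gcd(X_i^n-X_j^n)=1]=\sum_{m\geq1}\mu(m)\,\Pp[X_i^n\equiv X_j^n \bmod m]$, and note that each term converges to $\mu(m)m^{-d}$ by the equidistribution of Lemma~\ref{lem:chinese-folner} applied to the independent pair $(X_i^n,X_j^n)$; the difficulty, exactly as in the classical proof of~(\ref{eq:classical}), is a tail estimate uniform in $n$, and this is precisely where the coprimality-density hypothesis must be used to exclude the degenerate Følner sequences (concentrated on a sublattice, or too thin) for which the interchange would fail. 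An alternative, perhaps cleaner, route is to invoke that the present hypotheses are exactly those of Proposition~\ref{prop:coprime}, so that $\mu_{F_n,\cop}\to\mu_{\infty,\cop}$ with $\mu_{\infty,\cop}$ translation invariant and ergodic; writing the edge density as the spatial average $\E_{\mu_{F_n,\cop}}\big[\tfrac{1}{|F_n|}\sum_{x\in F_n}\omega(x)\big]$ and transferring it to the mean $1/\zeta(d)$ by a pointwise ergodic theorem, as already used in Fact~\ref{fact:cex}. The delicate step in that route is the interchange of the spatial-average limit with the measure limit, i.e.\ a diagonal argument between the window $F_n$ and the measure $\mu_{F_n,\cop}$. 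In any event, for the scaled convex bodies of Theorem~\ref{thm:coprime} the edge-density convergence is classical, following from~(\ref{eq:classical}) applied to the difference $X_i^n-X_j^n$.
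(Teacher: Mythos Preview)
Your three-step outline is exactly the paper's intended argument (the paper writes only ``by the same arguments as in Section~\ref{subsec:coprime}''): convergence of the reductions $(\bar X_1^n,\dots,\bar X_k^n)$ to i.i.d.\ Haar variables on $\stand$ via Lemma~\ref{lem:chinese-folner} and independence, coordinatewise domination of any subsequential limit by $(\delta(X_i^\infty,X_j^\infty))_{i<j}$ via upper semicontinuity exactly as in Proposition~\ref{prop:stochordering}, and then matching a single marginal to force equality.

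You are also right that the marginal to be matched is now the \emph{edge} density $\Pp[\gcd(X_1^n-X_2^n)=1]\to 1/\zeta(d)$, which is a two-point statement, whereas the stated hypothesis is the one-point density $\Pp[\gcd(Y_n)=1]\to 1/\zeta(d)$; these are genuinely different (the first is invariant under translating $F_n$, the second is not), and the paper does not spell out how to pass from one to the other. Your two proposed routes --- M\"obius inversion with a uniform tail estimate, or an ergodic-theorem transfer from $\mu_{\infty,\cop}$ --- both face the interchange-of-limits issue you name, and neither is carried to completion. So your proposal is at the same level of completeness as the paper's own sketch: the strategy is correct, and you have accurately isolated the one step that requires work beyond a verbatim transcription of Section~\ref{subsec:coprime}. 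For the special case of scaled convex bodies (Theorem~\ref{thm:coprime}), your closing remark that~(\ref{eq:classical}) applied to the difference body yields the edge-density convergence does close the gap.
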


Actually, we can even get a result combining both Proposition~\ref{prop:coprime} and Proposition~\ref{prop:graphon}.

\begin{prop}
\label{prop:generalcoprime}
Let $d\geq 1$ and let $(F_n)$ be a F\o lner sequence of $\Z^d$. Assume that the probability that a uniform element of $F_n$ is coprime converges to $1/\zeta(d)$.
Let $(X_n)$ denote a sequence of independent uniform elements of $\stand$.

Let $M\geq 1$ and $R\geq 1$. For every $n$, let $(Y_m^n)_{1\leq m\leq M}$ denote $M$ independent uniform elements in $F_n$. Consider the following random maps:
$$
\begin{array}{lccl}
\psi_n : & (\llbracket 1,M,\rrbracket\times \llbracket -R,R\rrbracket^d)^2&\longrightarrow& \{0,1\} \\
    & ((m_0,{y_0}),(m_1,{y_1})) & \longmapsto & \mathds{1}_{Y_{m_0}^n+{y_0}\text{ is visible from }Y_{m_1}^n+{y_1}},\end{array}
$$
$$
\begin{array}{lccl}
\psi_\infty : & (\llbracket 1,M,\rrbracket\times \llbracket -R,R\rrbracket^d)^2&\longrightarrow& \{0,1\} \\
    & ((m_0,{y_0}),(m_1,{y_1})) & \longmapsto & \mathds{1}_{\forall p,~X_{m_0}(p)+\overline{y_0}\not= X_{m_1}(p)+\overline{y_1}}.\end{array}
$$

Then, the distribution of $\psi_n$ converges to that of $\psi_\infty$.
\end{prop}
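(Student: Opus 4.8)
The plan is to mirror the three-step strategy of Section~\ref{subsec:coprime}: establish a clean ``per-prime'' convergence, pass to a stochastic domination via upper semicontinuity of $\min$, and finally upgrade domination to equality by matching one-coordinate marginals --- which is where the coprimality-density hypothesis enters. Throughout, write $e=((m_0,y_0),(m_1,y_1))$ for a generic element of the (finite) index set $(\llbracket 1,M\rrbracket\times\llbracket -R,R\rrbracket^d)^2$, and recall that since this index set is finite, convergence in distribution of $\psi_n$ to $\psi_\infty$ is exactly convergence of the finitely many atomic probabilities.

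First I would introduce the per-prime refinement: for each prime $p$, record whether the two relevant points coincide modulo $p$, yielding a random map $\psi_n^\flat$ valued in $\{0,1\}^\Prime$ such that $\psi_n(e)=\min_{p\in\Prime}\psi_n^\flat(e)_p$. The key soft input is a multi-point version of Lemma~\ref{lem:chinese-folner}: as $Y_1^n,\dots,Y_M^n$ are independent and each reduction modulo $N$ converges to the uniform law on $(\Z/N\Z)^d$, the joint reduction converges to the uniform law on $((\Z/N\Z)^d)^M$. Since the restriction of $\psi_n^\flat$ to any finite set of primes is a locally constant function of the reductions modulo $\prod_{p\le P}p$, this forces $\psi_n^\flat$ to converge to $\psi_\infty^\flat$, the analogous object built from the independent uniform residues $X_m(p)$; this is the exact analogue of Lemma~\ref{lem:coprime}. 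Then, exactly as in the proof of Proposition~\ref{prop:stochordering}, I would couple $\psi_n$ with $\psi_n^\flat$, pass to a subsequential limit, and use that $\{(\omega,\omega'):\omega(e)\le\omega'(e)_p\}$ is clopen together with the pointwise bound $\psi_n(e)\le\psi_n^\flat(e)_p$ to conclude that every subsequential limit of the law of $\psi_n$ is stochastically dominated by the law of $\psi_\infty=\min_p\psi_\infty^\flat$. Because the coordinates are $\{0,1\}$-valued and dominated in the coupling, it then suffices to match each one-coordinate marginal: matching $\E[\psi_n(e)]$ forces pointwise, hence joint, equality.

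So everything reduces to showing $\lim_n \Pp[\psi_n(e)=1]=\Pp[\psi_\infty(e)=1]$ for each $e$. On the diagonal $m_0=m_1$ both sides are deterministic and equal to $\mathds{1}_{\gcd(y_0-y_1)=1}$. The content is the off-diagonal case $m_0\ne m_1$: writing $c:=y_0-y_1$, one computes on the limit side $\Pp[\psi_\infty(e)=1]=\prod_p(1-p^{-d})=1/\zeta(d)$ (independently of the offsets), so the task is to prove
\[
\Pp\big[\gcd(Y_{m_0}^n-Y_{m_1}^n+c)=1\big]\xrightarrow[n\to\infty]{}1/\zeta(d),
\]
that is, that the difference of two independent near-uniform points inherits the coprimality density. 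The upper bound $\limsup_n\le 1/\zeta(d)$ comes for free, either from the domination just established or by truncating at primes $p\le P$ and using the multi-point Chinese--F\o lner convergence to get $\prod_{p\le P}(1-p^{-d})$ and letting $P\to\infty$.

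The hard part, and the only step genuinely beyond soft arguments, is the matching lower bound, i.e. a uniform control of the large-prime tail $\Pp[\,\exists\,p>P:\ p\mid(Y_{m_0}^n-Y_{m_1}^n+c)\ne 0\,]$, uniformly in $n$ as $P\to\infty$. I would bound it by $\sum_{p>P}\Pp[\,p\mid Z,\ Z\ne 0\,]$ with $Z:=Y_{m_0}^n-Y_{m_1}^n+c$, note that for $Z\ne 0$ only primes $p$ up to the diameter of $F_n$ contribute, and estimate each term through the collision/equidistribution structure of $Y_{m_0}^n,Y_{m_1}^n$ modulo $p$ (the F\o lner property forcing each fixed-modulus marginal to its uniform value). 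This is precisely where the coprimality-density hypothesis must be used --- in parallel with the way the single-point marginal was pinned down in Proposition~\ref{prop:coprime} --- to guarantee that no positive fraction of the mass of $Y_{m_0}^n-Y_{m_1}^n$ concentrates on vectors sharing a large common prime factor; the summability $\sum_p p^{-d}<\infty$ for $d\ge 2$ (the case $d=1$ being degenerate, with $1/\zeta(1)=0$) is what makes the tail vanish. I expect this uniform tail estimate, rather than the domination machinery, to be the main obstacle, and it is exactly the difference-analogue of the classical density computation behind~(\ref{eq:classical}). Once it is in place, the marginals match, domination upgrades to equality, and the same argument simultaneously yields Proposition~\ref{prop:graphon}.
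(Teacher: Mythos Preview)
Your three–step plan (per–prime convergence, stochastic domination via upper semicontinuity of $\min$, then marginal matching) is exactly what the paper means by ``the same arguments as in Section~\ref{subsec:coprime}''; the paper gives no further detail for either Proposition~\ref{prop:graphon} or Proposition~\ref{prop:generalcoprime}. Your reduction of the problem to the single off–diagonal marginal
\[
\Pp\big[\gcd(Y_{m_0}^n-Y_{m_1}^n+c)=1\big]\xrightarrow[n\to\infty]{}1/\zeta(d)
\]
is also correct: once this is known, the monotone coupling forces equality coordinate by coordinate and the proof is complete. So the architecture matches the paper.

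Where your write–up is genuinely incomplete is precisely the step you flag as ``the main obstacle'', and your sketch does not close it. The F\o lner property gives $\Pp[p\mid Z]\to p^{-d}$ only for each \emph{fixed} prime, while the tail $\sum_{p>P}\Pp[p\mid Z,\,Z\neq 0]$ requires control for $p$ ranging up to the diameter of $F_n$; there neither F\o lner nor the summability $\sum_p p^{-d}<\infty$ helps without a uniform estimate you never establish. Your invocation of the coprimality–density hypothesis at this point is a slogan rather than an argument: the hypothesis speaks of the single–point density $\Pp[\gcd(Y_n)=1]$, and you give no mechanism by which it constrains the collision probabilities of $Y_{m_0}^n-Y_{m_1}^n$ modulo large primes. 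The phrase ``difference–analogue of the classical density computation behind~(\ref{eq:classical})'' is misleading, since~(\ref{eq:classical}) concerns dilations of a fixed body, not arbitrary F\o lner sequences satisfying a one–point hypothesis. The paper is equally silent on this step, so your proposal is faithful to the paper's level of detail; but you should be aware that what you have written for the tail bound is not yet a proof, and that supplying it is the actual content of the proposition.
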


This theorem can be readily adapted to the whole \textsc{gcd} profile \begin{small}(one assumes tightness, considers maps to $\llbracket 0,\infty\rrbracket$ rather than to $\{0,1\}$, and predicts the \textsc{gcd} of $(Y_{m_0}^n+{y_0})-(Y_{m_1}^n+{y_1})$)\end{small} and the case of affine subspaces.

Let us conclude with a last generalisation. One may be interested in other arithmetic conditions than coprimality: for example, saying that a number is \defini{$k$-free} if no $p^k$ divides it, one may colour in white the $k$-free points of $\Z$, or the points in $\Z^d$ with $k$-free \textsc{gcd}. See \cite{baakemoodypleasants, cesaro85, mirsky47, mirsky48, pleasantshuck}. One can generalise Proposition~\ref{prop:generalcoprime} to such contexts as follows.
Recall that $\Z$ and $\Zhat$ can be endowed with the profinite topology. Likewise, $\Z^d$ and $\Zhat^d$ can be endowed with the product of profinite topologies, which we also refer to as the profinite topology.

\begin{example}For every $k\geq 0$, the set $\{x\in\Z^d:\gcd(x)\text{ is }k\text{-free}\}$ is profinitely closed in $\Z^d$. For $k=1$, this coincides with the set $\{x\in\Z^d:\gcd(x)=1\}$.
\end{example}

\newcommand{\colv}{\mathsf{col}_{V}}

Let $V$ be a subset of $\Z^d$, and let $\colv:=\mathds{1}_V$. Let $\mu_{\infty,\colv}$ denote the distribution of $x\mapsto \mathds{1}_{Y+x\in \overline{V}}$, where $Y$ is Haar distributed in $\hat{\Z}^d$ and $\overline{V}$ denotes the closure of the set $V$ \emph{in $\hat{\Z}^d$}.

\begin{prop}
Let $d\geq 1$ and let $(F_n)$ be a F\o lner sequence of $\Z^d$. Let $V$ denote a subset of $\Z^d$. Assume that $V$ is profinitely closed \emph{in $\Z^d$}. Assume that $\frac{|V\cap F_n|}{|F_n|}$ converges to $\mu_{\infty,\colv}(\{\omega:\vec{0}\in\omega\})$.
Let $(X_m)$ denote a sequence of independent Haar distributed elements of $\Zhat^d$.

Let $M\geq 1$ and $R\geq 1$. For every $n$, let $(Y_m^n)_{1\leq m\leq M}$ denote $M$ independent uniform elements in $F_n$. Consider the following random maps:
$$
\begin{array}{lccl}
\psi_n : & (\llbracket 1,M,\rrbracket\times \llbracket -R,R\rrbracket^d)^2&\longrightarrow& \{0,1\} \\
    & ((m_0,{y_0}),(m_1,{y_1})) & \longmapsto & \mathds{1}_{(Y_{m_0}^n+{y_0})-(Y_{m_1}^n+{y_1})\in V},\end{array}
$$
$$
\begin{array}{lccl}
\psi_\infty : & (\llbracket 1,M,\rrbracket\times \llbracket -R,R\rrbracket^d)^2&\longrightarrow& \{0,1\} \\
    & ((m_0,{y_0}),(m_1,{y_1})) & \longmapsto & \mathds{1}_{X_{m_0}-X_{m_1}+\profcol(y_0-{y_1})\in \overline{V}}.\end{array}
$$

Then, the distribution of $\psi_n$ converges to that of $\psi_\infty$.
\end{prop}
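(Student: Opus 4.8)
The plan is to imitate Section~\ref{subsec:coprime} essentially verbatim: first lift the problem to the profinite level, where the relevant maps are continuous and convergence is automatic, and then deal with the passage to the indicator of the closed set $\overline{V}$ — which is only upper semicontinuous — by a stochastic domination argument that one tightens by matching one-coordinate marginals.

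First I would record the profinite input and reduce to it. By Lemma~\ref{lem:zhat} (more precisely its proof, which identifies the $\vec 0$-marginal of $\mu_{\infty,\profcol}$ with the Haar measure of $\Zhat^d$), the law of $\profcol(Y_m^n)$ converges to the Haar measure of $\Zhat^d$ for each fixed $m$; since $Y_1^n,\dots,Y_M^n$ are independent and products of weakly convergent sequences of measures converge to the product, the joint law of $W_n:=(\profcol(Y_1^n),\dots,\profcol(Y_M^n))$ converges to that of $W_\infty:=(X_1,\dots,X_M)$. Writing $I:=(\llbracket 1,M\rrbracket\times\llbracket -R,R\rrbracket^d)^2$ and letting $\Delta\colon(\Zhat^d)^M\to(\Zhat^d)^I$ be the continuous map sending $w$ to $\big(w_{m_0}-w_{m_1}+\profcol(y_0-y_1)\big)_{((m_0,y_0),(m_1,y_1))\in I}$, the continuous mapping theorem gives $\Delta(W_n)\to\Delta(W_\infty)$ in distribution. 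Because $V$ is profinitely closed one has $z\in V\iff\profcol(z)\in\overline V$ for every $z\in\Z^d$; hence, applying the indicator $h:=\mathds 1_{\overline V}$ in each of the $I$ coordinates, both $\psi_n$ and $\psi_\infty$ are obtained as $\psi_n=h^{\otimes I}(\Delta(W_n))$ and $\psi_\infty=h^{\otimes I}(\Delta(W_\infty))$. The entire difficulty is thus to pass the coordinatewise, merely upper semicontinuous, map $h^{\otimes I}$ through the convergence $\Delta(W_n)\to\Delta(W_\infty)$.

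For this I would reproduce the domination step of Proposition~\ref{prop:stochordering}. Since $\overline V$ is closed, each set $\{w:w_s\in\overline V\}$ is closed in $(\Zhat^d)^I$, and therefore for every up-set $A\subseteq\{0,1\}^I$ the set $\{w:h^{\otimes I}(w)\in A\}$ — a finite union of finite intersections of such sets — is closed. The portmanteau theorem then yields $\limsup_n\Pp[\psi_n\in A]\le\Pp[\psi_\infty\in A]$ for every up-set $A$. As $I$ is finite, $\{0,1\}^I$ is discrete and compact, so any subsequence of $(\mathrm{Law}(\psi_n))$ has a further subsequence converging to some $\nu$, and the displayed inequality forces $\nu(A)\le\Pp[\psi_\infty\in A]$ on up-sets, i.e. $\nu$ is stochastically dominated by $\mathrm{Law}(\psi_\infty)$ by Strassen's theorem. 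Exactly as in Proposition~\ref{prop:coprime}, it now suffices to check that $\nu$ and $\mathrm{Law}(\psi_\infty)$ have the same one-coordinate marginals: a monotone coupling then has ordered and equal coordinates, hence equal coordinates almost surely, and since $I$ is finite this gives $\nu=\mathrm{Law}(\psi_\infty)$; as the subsequence was arbitrary, $\mathrm{Law}(\psi_n)\to\mathrm{Law}(\psi_\infty)$.

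There remains the marginal matching, which is the main obstacle. For a diagonal coordinate $s=((m,y_0),(m,y_1))$ the observer cancels, so $\psi_n(s)=\mathds 1_{y_0-y_1\in V}=\psi_\infty(s)$ is deterministic and equal on both sides. For an off-diagonal coordinate $s=((m_0,y_0),(m_1,y_1))$ with $m_0\ne m_1$, the limiting marginal is $\Pp[\psi_\infty(s)=1]=\Pp[X_{m_0}-X_{m_1}+\profcol(y_0-y_1)\in\overline V]$, which equals the Haar measure of $\overline V$ because the difference of two independent Haar elements of the compact group $\Zhat^d$ is again Haar; by hypothesis this Haar measure is also the limit of $\frac{|V\cap F_n|}{|F_n|}$. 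One must match it with $\Pp[\psi_n(s)=1]=\Pp[Y_{m_0}^n-Y_{m_1}^n+(y_0-y_1)\in V]$, a \emph{difference} density rather than the single-point density furnished by the assumption. Upper semicontinuity supplies only the inequality $\limsup_n\Pp[\psi_n(s)=1]\le$ (Haar measure of $\overline V$), and the whole point is to produce the matching lower bound: approximating $\overline V$ from outside by the clopen sets $\pi_m^{-1}(\pi_m(\overline V))$ with $\pi_m\colon\Zhat^d\to(\Z/m\Z)^d$, Lemma~\ref{lem:chinese-folner} handles each fixed level $m$ (the reduction of a difference of two independent uniforms is still asymptotically uniform), while controlling the escape of mass to $\partial\overline V$ uniformly over the remaining infinitely many primes is exactly where the density assumption, combined with a Borel--Cantelli type estimate on the relevant sum of coset probabilities, has to be spent. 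This is precisely the obstruction flagged by $\textdbend$ on pages~\pageref{subtil1} and~\pageref{subtil2}: $h=\mathds 1_{\overline V}$ is only upper semicontinuous, and the density hypothesis is the input that prevents the limiting configuration from losing mass on the boundary $\partial\overline V$.
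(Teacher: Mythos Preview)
Your overall architecture is exactly the route the paper gestures at (it gives no proof of this proposition beyond the remark that the earlier graphon results follow ``by the same arguments as in Section~\ref{subsec:coprime}''): lift to $(\Zhat^d)^M$ via Lemma~\ref{lem:zhat}, observe that $\psi_n=h^{\otimes I}\circ\Delta(W_n)$ with $h=\mathds 1_{\overline V}$, extract stochastic domination from the upper semicontinuity of $h$, and then try to tighten by matching one-coordinate marginals. The first two stages, and the treatment of diagonal coordinates, are correct and more explicit than anything in the paper.

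The genuine gap is the one you yourself flag in the last paragraph but do not close. For an off-diagonal coordinate $s$ with $m_0\ne m_1$ you need
\[
\Pp\bigl[Y^n_{m_0}-Y^n_{m_1}+(y_0-y_1)\in V\bigr]\longrightarrow \text{Haar}(\overline V),
\]
and this is \emph{not} the quantity controlled by the hypothesis. The density assumption, together with the F\o lner property, gives $\Pp[Y^n_m+y\in V]\to\text{Haar}(\overline V)$ for each \emph{fixed} shift $y$; but conditioning on $Y^n_{m_1}=b$ one has $\Pp[Y^n_{m_0}-b+z\in V]=|V\cap(F_n+z-b)|/|F_n|$ with $b$ ranging over all of $F_n$, so the shift is unbounded and F\o lner no longer reduces it to the single-point case. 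Your clopen approximation gives the upper bound $\limsup\le\text{Haar}(\overline V)$, but for a matching lower bound one would need $\sup_n\Pp[Y^n_{m_0}-Y^n_{m_1}+z\in V_N'\setminus V]\to 0$ as $N\to\infty$, and the phrase ``a Borel--Cantelli type estimate'' is not an argument for this: nothing in the hypotheses bounds the density of $V_N'\setminus V$ in \emph{arbitrary} translates of $F_n$. The boundary heuristic you invoke also cannot be made into a continuous-mapping argument, since already for the coprime set $\overline V$ has empty interior and $\partial\overline V=\overline V$ carries Haar mass $1/\zeta(d)$. In short, your proof stops precisely at the step that distinguishes the multi-point (graphon) statement from the single-point local statement of Proposition~\ref{prop:coprime}, and the sketch you give in lieu of that step does not succeed as written.
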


\paragraph{Acknowledgements} At the end of a talk given by Nathanaël Enriquez, B\'alint Vir\'ag asked him about the local limit of visible points: I would like to thank B\' alint Vir\'ag for asking this question and Nathanaël Enriquez for letting me know of it. I am also grateful to Nathanaël Enriquez for many enthusiastic discussions about this project. I am thankful to Subhajit Goswami and Aran Raoufi for comments on an earlier version of this paper. Finally, I am grateful to my postdoctoral advisors Nicolas Curien and Jean-François Le Gall, as well as to the ERC grant GeoBrown and the Université Paris-Sud, for providing me with an excellent working environment.

\small
\newcommand{\etalchar}[1]{$^{#1}$}

\end{document}